\numberwithin{equation}{section} 
\newtheorem{remark}{Remark}
\newtheorem{lemma}{Lemma}[section]
\newtheorem{theorem}{Theorem}[section]
\newtheorem{definition}{Definition}[section]
\newtheorem{proposition}{Proposition}[section]
\newtheorem{corollary}{Corollary}[section]
\def\e{{\epsilon}}
\def\F{{\mathcal F}}
\def\P{{\mathbb P}}
\def\E{{\mathbb E}}
\def\R{{ \mathbb R}}
\def\vs{\vspace{1mm}}
\def\ds{\displaystyle}
\newcommand{\norm}[1]{\left\lVert#1\right\rVert}
\def\Z20{{\mathbb{Z}^2_0}}
\def\T2{\mathbb{T}^2}
\def\hs{\hspace{2mm}}
\def\vs{\vspace{2mm}}
\begin{document}
\title{Large deviations principle for the invariant measures of the 2D stochastic Navier-Stokes equations with vanishing noise correlation}
\author{S. Cerrai, N. Paskal}
\date{}

\maketitle
\begin{abstract}
We study the two-dimensional incompressible Navier-Stokes equation on the torus, driven by Gaussian noise that is white in time and colored in space. We consider the case where the magnitude of the random forcing $\sqrt{\e}$ and its correlation scale $\delta(\e)$ are both small. We prove a large deviations principle for the solutions, as well as for the family of invariant measures, as $\e$ and $\delta(\e)$ are simultaneously sent to $0$, under a suitable scaling. 
\end{abstract}

\section{Introduction}
In the present paper, we consider the  two-dimensional incompressible Navier-Stokes equation on the torus $\T2 =[0,2\pi]^2$, perturbed by a small additive noise
\begin{equation}\label{eq_NS_bad}
	\begin{cases}
		\partial_t u(t,x)+ (u (t,x)\cdot \nabla)u(t,x)= \Delta u(t,x) + \nabla p(t,x)+  \sqrt{\e \,Q_\e}\ \partial_t \xi(t,x), 
		\\[10 pt] \mathrm{div} \ u(t,x) = 0,   \hs \hs u(0,x) = u_0(x), \hs \hs u \text{ is periodic}.
	\end{cases}
\end{equation}
The functions $u(t,x) \in \R^2$ and $p(t,x) \in \R$ denote, respectively,  the velocity and the pressure of the fluid at any $ (t,x) \in \R^+ \times \T2$. The random forcing $\partial_t \xi(t,x)$ is a space-time white noise, while the operator $\sqrt{Q_\e}$ provides spatial correlation to the noise on a scale of size $\delta(\e)$. Here, we are interested in the behavior of equation $\eqref{eq_NS_bad}$ as the noise magnitude $\sqrt{\e}$ and the correlation scale $\delta(\e)$ are simultaneously sent to $0$. 

In two dimensions,  the incompressible Navier-Stokes equation driven by space-time white noise is well-posed only in spaces of negative regularity (see \cite{dd}). The driving noise must have more regularity in the spatial variable in order to have function-valued solutions. In our case, we consider a smoothing operator $\sqrt{Q_\e}$ that provides sufficient regularity to interpret equation $\eqref{eq_NS_bad}$ in the space $C([0,T];[L^2(\T2)]^2)$, for any fixed $\e > 0$. In fact, the regularization $\sqrt{Q_\e}$ can be chosen to decay to the identity operator slowly enough for the $\sqrt{\e}$ factor to compensate and produce a function-valued limit. 

Under the present assumptions, the $\e \downarrow 0$ limit of equation $\eqref{eq_NS_bad}$ in $C([0,T];[L^2(\T2)]^2)$ is unsurprisingly the corresponding unforced Navier-Stokes equation. A more interesting problem is the quantification of the convergence rate, which can be done using large deviations theory. In \cite{cd}, it was shown that the solutions to the Leray-projected version of equation $\eqref{eq_NS_bad}$ satisfy a large deviations principle in $C([0,T];[L^2(\T2)]^2)$ with rate function
\begin{equation*}
	I(u) = \frac{1}{2} \int_0^T \norm{u'(t)+Au(t)+B(u(t))}_{[L^2(\T2)]^2}^2 dt,
\end{equation*}
where $A$ is the Stokes operator and $B$ is the Navier-Stokes nonlinearity. This result was proven using the weak convergence approach developed in \cite{bdm}, which is particularly effective at handling multiple parameter limits. The weak convergence method was also used in \cite{bm1} and \cite{bm2} to prove large deviations principles for the stochastic Navier-Stokes with viscosity vanishing at a rate proportional to the strength of the noise, which is believed to be a relevant problem in the study of turbulent fluid dynamics. 

If the operator $\sqrt{Q_\e}$ is simultaneously smoothing enough but not too degenerate, then equation $\eqref{eq_NS_bad}$ will possess a unique ergodic invariant probability measure (see \cite{f}). In the $\e \downarrow 0$ limit, it can be shown that these measures converge weakly to the Dirac measure at $0$. For fixed correlation strength $\delta(\e) = \delta > 0$, it was proven in \cite{bc} that the invariant measures also satisfy a large deviations principle in $[L^2(\T2)]^2$ with rate function given by the quasi-potential
\begin{equation*}
	U_\delta(x) = \inf \left\{I^\delta_T(u)\,:\,T > 0,\ u \in C([0,T];[L^2(\T2)]^2),\ u(0) = 0,\ u(T) =x \right\},
\end{equation*}
where $I^\delta_T:C([0,T];[L^2(\T2)]^2) \to [0,+\infty]$ is the action functional for the paths, defined by
\begin{equation*}
	I^\delta_T(u) := 
		\frac{1}{2} \int_0^T \norm{Q_\delta^{-1} \Big(u'(t) + Au(t) + B(u(t)) \Big)}_{[L^2(\T2)]^2}^2dt.
\end{equation*}
This result was generalized in \cite{m} to the case of the Navier-Stokes equations posed on a bounded domain with Dirichlet boundary conditions. In \cite{m} they also considered the case where the equation has a deterministic, time-independent forcing so that the limiting dynamics may be nontrivial point attractors or sets of attractors. Both papers established their results by following the general strategy introduced in \cite{s2} for proving large deviations principles for families of invariant measures.
 
In \cite{bcf}, it was also proven that the quasipotential $U_\delta(x)$, corresponding to the problem on the torus, converges pointwise to 
\begin{equation*}
U(x) = \norm{x}_{[H^1(\T2)]^2}^2,
\end{equation*}
 as $\delta \downarrow 0$. This is a consequence of the orthogonality of $Au$ and $B(u)$ in $[L^2(\T2)]^2$, which in general does not hold for the problem posed on a bounded domain. In some sense, $U(x)$ is what one would expect the quasi-potential for the  space-time white noise case to be, if the time-stationary problem were well-posed.

The purpose of this article is to bridge the results of \cite{bc} and \cite{bcf} with the result of \cite{cd}. Rather than first taking $\e \downarrow 0$ and then studying what happens as the regularization is removed, we take $\e$ and $\delta$ to $0$ simultaneously. We prove that the invariant measures of equation $\eqref{eq_NS_bad}$ satisfy a large deviations principle directly with rate function $U(x)$, under suitable conditions on the regularization $\sqrt{Q_\e}$.

To prove this result, we first prove a large deviations principle for the solutions of equation $\eqref{eq_NS_bad}$ in $ C([0,T] ;[L^2(\T2)]^2)$, that is uniform with respect to initial conditions in appropriate sets of functions. This is done by proving a large deviations principle for the linearized problem using the weak convergence approach and then transferring this to the nonlinear problem via the contraction principle. We note that this different method allows for slower decay of the correlation scale $\delta(\e)$ than the one introduced in   \cite{cd}. The proof of the large deviations principle for the invariant measures then follows for some points along the same lines as \cite{bc}, but requires several crucial modifications to account for the decaying regularity of the driving noise.

\section{Preliminaries}
We consider equation $\eqref{eq_NS_bad}$ posed on the space of square-integrable, mean zero, space-periodic functions. For an introduction to the 2D Navier-Stokes equations on the torus, see the book \cite{t} by Temam. We follow the notations and conventions used there. Denoting $\mathbb{T}^2:= [0,2\pi]^2$, we define
\begin{equation*}
	H:= \Big\{ f \in [L^2(\T2)]^2: \int_{\T2} f(x)dx = 0, \hs \mathrm{div}  f = 0, \hs f  \text{ is periodic in } \T2\Big\},
\end{equation*}
where the periodic boundary conditions are interpreted in the sense of trace. It can be shown that $H$ is a Hilbert space when endowed with the standard $L^2(\T2)$ inner product. We denote the norm and inner product on $H$ by $\norm{\cdot}_H$ and $\langle \cdot,\cdot \rangle_H$, respectively. Moreover, in what follows, for every $p\geq 1$ we shall write $L^p$ instead of $L^p(\T2)$.

We denote by $H_\mathbb{C}$, the complexification of $H$, and by  $\Z20$ the set $\mathbb{Z}^2 \setminus \{(0,0)\}$. The family $\{e_k\}_{k \in \Z20} \subset H_\mathbb{C}$ defined by
\begin{equation*}
	e_k(x) = \frac{1}{2\pi} \frac{(k_2,-k_1)}{\sqrt{k_1^2+k_2^2}}\ e^{i x \cdot k},\ \ \ \ \  x \in \mathbb{T}^2,  \ \  k= (k_1,k_2)\in \Z20,
\end{equation*}
form a complete orthonormal system in $H_\mathbb{C}$. Similarly, the family $\{\mathrm{Re}(e_k)\}_{k \in \Z20} \subset H$ form a complete orthonormal system in $H$. In what follows, we use the basis $\{e_k\}_{k \in\,\Z20}$ with the implicit assumption that we are only considering the real components. 

Next, we let $P$ be the orthogonal projection from $[L^2(\T2)]^2$ onto $H$, known as the Leray projection. We define the Stokes operator by setting 
\begin{equation*}
	Au := -P \Delta u,\ \ \ \  u \in\,D(A):= H \cap [W^{2,2}(\T2)]^2.
\end{equation*}
It is easy to see that $A$ is a diagonal operator on $H$ with respect to the basis $\{e_k\}_{k \in \Z20}$. In particular, for any $k \in \Z20$ we have
\begin{equation*}
	Ae_k = |k|^2e_k.
\end{equation*} 
Since $A$ is a positive, self-adjoint operator, for any $r \in \R$ we can define the fractional power $A^r$ with domain $D(A^r)$. In fact, it can be shown that $D(A^r)$ is the closure of $\mathrm{span}_{\,k \in \Z20}\,\langle e_k\rangle$ with respect to the $[W^{2r,2}(\T2)]^2$ Sobolev norm. To simplify our notations, we will denote $V^r:= D(A^{r/2})$, with the norm given by the $[W^{2r,2}(\T2)]^2$ Sobolev semi-norm
\begin{equation*}
	\norm{u}_r^2 := \norm{u}_{D(A^{r/2})}^2 = \norm{u}_{[H^r(\T2)]^2 }^2 = \sum_{k \in \Z20} |k|^{2r} \langle u, e_k \rangle_H^2.
\end{equation*}
In particular, we have that $V^2 = D(A)$ and $V:= V^1 = D(A^{1/2})$. For any $r \geq 0$, we denote by $V^{-r}$  the dual space of $V^r$. In addition, for any $p \geq 1$, we will use the shorthands
\begin{equation*}
 	L^p:= [L^p(\T2)]^2, \ \ \ \  W^{k,p}:= [W^{k,p}(\T2)]^2.
\end{equation*}

Next, we define the tri-linear form, $b:V \times V \times V \to \R$, by
\begin{equation*}
	b(u,v,w) := \int_{\T2} (u(x)\cdot \nabla)v(x) \cdot w(x)dx, \hs u,v,w \in V.
\end{equation*}
From standard interpolation inequalities and Sobolev embeddings, it follows that
\begin{equation}
\label{eq_sob1}	
|b(u,v,w)| \leq c\begin{cases}
		&\norm{u}_H^{1/2}\norm{u}_V^{1/2}\norm{v}_V\norm{w}_H^{1/2}\norm{w}_V^{1/2},\\[10pt]
		&\norm{u}_H^{1/2}\norm{u}_{V^2}^{1/2}\norm{v}_V \norm{w}_H,\\[10pt]
		&\norm{u}_H \norm{v}_V \norm{w}_H^{1/2} \norm{w}_{V^2}^{1/2},\\[10pt]
		&\norm{u}_H^{1/2} \norm{u}_V^{1/2} \norm{v}_V \norm{w}_H^{1/2} \norm{w}_V^{1/2},
		\end{cases}
\end{equation}
for smooth $u,v,w$. These inequalities can then be extended to the appropriate Sobolev spaces by continuity. We note that the first inequality in  $\eqref{eq_sob1}$ implies that $b$ is indeed well-defined and continuous on $V \times V \times V$. The tri-linear form $b$ also induces the continuous mappings $B: V \times V \to V'$ and $B:V \to V'$ defined by
\begin{align*}
	 & \langle B(u,v),w \rangle := b(u,v,w),
	\\[10pt]
	  &B(u) := B(u,u),
\end{align*}
for $u,v,w \in V$. It can  be shown that for any $u,v \in D(A)$
\begin{equation*}
	B(u,v)= P [(u\cdot \nabla)v],
\end{equation*} 
and
\begin{equation}
\label{eq_prop_non_negs}
\langle B(u,v),w \rangle_H = - \langle B(u,w),v \rangle_H,  \ \ \ \ u,v,w \in V.\end{equation}
Moreover
\begin{equation} \label{eq_prop_non_Au} \langle B(u),Au \rangle_H = 0, \ \ \ \  u \in D(A),
\end{equation} which implies
that
\[\langle B(u,v),v \rangle_H =0, \ \ \ \ u, v \in\,V.\]

Equation $\eqref{eq_prop_non_negs}$ is still true when considering the problem posed on a bounded domain with Dirichlet boundary conditions. Equation $\eqref{eq_prop_non_Au}$, on the other hand, only holds for the problem posed on the torus with periodic boundary conditions (for a proof of  $\eqref{eq_prop_non_Au}$, see for example \cite{ks}). We note that the proof of our main result relies on equation $\eqref{eq_prop_non_Au}$ in several places, and hence will not immediately generalize to the case of the Navier-Stokes equation on a bounded domain. 

As for the random forcing in equation $\eqref{eq_NS_bad}$, we assume that $\xi(t,x)$ is a cylindrical Wiener process on the Hilbert space of mean-zero functions in $[L^2(\T2)]^2$. We then set $w(t):= P\xi(t)$, so that $w$ has the formal expansion
\begin{equation*}
	w(t,x) = \sum_{k \in \Z20} e_k(x) \beta_k(t), \ \ \ \  t \geq 0, \ \ \ x \in \T2,
\end{equation*}
where  $\{\beta_k\}_{k \in \Z20}$ are a collection of independent, real-valued Brownian motions on some filtered probability space $(\Omega,\mathcal{F},\{\mathcal{F}_t\}_{t \geq 0}, \P)$. We assume that the covariance operator $Q_\e$ belongs to $\mathcal{L}(H;H)$ and takes the  form
\begin{equation}\label{eq_Q_cov}
	Q_\e := (I + \delta(\e) A^\beta)^{-1},
\end{equation}
for some $\beta > 0$ and $\delta(\e) > 0$. Since we are concerned with the singular noise limit, $\delta(\e)$ will be taken to be a strictly decreasing function of $\e$ such  that 
\[\lim_{\e \to 0} \delta(\e) = 0.\]
 Definition $\eqref{eq_Q_cov}$ implies that $Q_\e$ is diagonal with respect to the basis $\{e_k\}_{k \in \Z20}$. 
 \begin{remark}
 {\em In the present paper  we only take this particular form of the covariance operator in order to simplify our presentation. The results below can easily be adapted to more general covariance operators with the same smoothing and ergodic properties.}
	
 \end{remark}

The driving noise, $\sqrt{Q_\e}\,w(t)$, can thus formally be written as the infinite series
\begin{equation*}
	\sqrt{Q_\e}\,w(t,x) =  \sum_{k \in \Z20} \sigma_{\e,k}\,e_k(x) \,\beta_k(t) := \sum_{k \in \Z20} (1+\delta(\e) |k|^{2\beta})^{-1/2} e_k(x) \,\beta_k(t).
\end{equation*}
Since $\delta(\e)$ converges to zero, as $\e\downarrow 0$, the covariance operator $Q_\e$ convergences pointwise to the identity operator, as $\e \downarrow 0$. For each fixed $\e > 0$, it is immediate to check that $\sqrt{Q_\e}  \in\,\mathcal{L}(V^r,V^{r+\beta})$. In fact, one can show that 
\begin{equation}\label{eq_Q_reg}
	\norm{\sqrt{Q_\e}f }_{V^{r+q}} \leq \frac{1}{\sqrt{\delta(\e)}}\, \norm{f}_{V^r},
\end{equation}
for any $r \in \R$, $q \leq \beta$ and $f \in V^r$. Moreover, $Q_\e$ is a trace class operator in $H$ if and only if $\beta > 1$. This means that the Wiener process $\sqrt{Q_\e}\,w$ is $H$-valued only when $\beta > 1$. 
 
By taking the Leray projection on both sides of equation $\eqref{eq_NS_bad}$, we obtain the following stochastic evolution problem
\begin{equation}\label{eq_NS_good}
	\begin{cases}
		du(t) + \left[Au(t) +B(u(t))\right]dt = \sqrt{\e\,Q_\e}\, dw(t),
		\\[10pt] u(0) = x.
	\end{cases}
\end{equation}
We assume the initial condition $x$ is an element of $H$. As is well-known (see \cite{bl} or Chapter 15 of \cite{dz1}), under the assumption that $\beta > 0$, equation $\eqref{eq_NS_good}$ admits a unique generalized solution, $u_\e^x \in C([0,T];H)$. That is, there exists a progressively measurable process $u_\e^x$ taking values in $ C([0,T];H)$, $\P$-a.s. for any $T > 0$, such that
\begin{align*}
	\langle u^x_\e(t),h\rangle_H & = \langle x,h \rangle_H - \int_0^t \langle u^x_\e(s), Ah \rangle_H
	\\ & - \int_0^t \langle B(u^x_\e(s),h),u_\e(s)\rangle_H + \langle \sqrt{\e\,Q_\e}\,w(t),h\rangle_H, \quad \P-a.s.,
\end{align*}
for any $h \in D(A)$ and $t \in [0,T]$.

The condition $\beta > 0$ is not enough to ensure the existence and uniqueness of an invariant measure for equation $\eqref{eq_NS_good}$.  In the last twenty five years there has been an extremely intense activity aimed to the study of the ergodic properties of randomly perturbed PDEs in fluid dynamics and, in particular, of equation \eqref{eq_NS_bad}.  As shown  for instance in the monograph $\cite{ks}$, a sufficient condition for this is that $Q_\e$ be trace-class in $H$ and $\sigma_{\delta(\e),k} \neq 0$ for all $k$. Notice that if $\beta>1$, then $Q_\e$ is a trace-class operator and by applying It\^o's formula  we get
\begin{equation}
\label{ns1}
\mathbb{E}\,\Vert u^x_\e(t)\Vert_H^2+2\int_0^t\mathbb{E} \Vert u^x_\e(s)\Vert_V^2\,ds=\Vert x\Vert_H^2+t\,\e\, \mbox{Tr}\,Q_\e\leq \Vert x\Vert_H^2+t\,\e\,\delta_\e^{-1/\beta}.
\end{equation}
This means that $u^x_\e \in\,L^2(\Omega;C([0,T];H)\cap L^2(0,T;V))$ and, in particular, for every $\e>0$ there exists an invariant measure.

Now, let $\{\nu_\e\}_{\e > 0}$ be this family of invariant measures. Each $\nu_\e$ is ergodic in the sense that
\begin{equation*}
\lim_{T \to \infty} \frac{1}{T} \int_0^T f(u_\e^x(t))dt = \int_H f(h)d\nu_\e(x),
\end{equation*}
for all $x \in H$ and Borel-measurable $f:H \to \R$. If
\begin{equation}
	\label{ns2}
	\sup_{\e \in\,(0,1)} \e\,\delta_\e^{-1/\beta}<\infty,
\end{equation}
we have that the family $\{\nu_\e\}_{\e>0}$ is tight in $H$. Actually, due to \eqref{ns1} and the invariance of $\nu_\e$, for every $T>0$ we have
\[\begin{aligned}
\int_H \Vert x\Vert_{V}^2\,d\nu_\e(x) & =\frac 1T \int_0^T \int_H\mathbb{E}\,\Vert u^x_\e(t)\Vert_{V}^2\,d\nu_\e(x)\,dt=\frac 1T\int_H \int_0^T \mathbb{E}\,\Vert u^x_\e(t)\Vert_{V}^2\,dt\,d\nu_\e(x)\\[10pt]
& \leq \frac 1{2\,T}\int_H\Vert x\Vert_H^2\,d\nu_\e(x)+\frac 12\,\e\,\delta_\e^{-1/\beta}\leq  \frac 1{2\,T}\int_H\Vert x\Vert_V^2\,d\nu_\e(x)+\frac 12\,\e\,\delta_\e^{-1/\beta}.
\end{aligned}\]
Then, thanks to \eqref{ns2}, if we choose $T>1$ we get
\[\sup_{\e \in\,(0,1)}\int_H \Vert x\Vert_{V}^2\,d\nu_\e(x)<\infty,\] and this implies the tightness of $\{\nu_\e\}_{\e \in\,(0,1)}$ in $H$. In fact, provided that
\[\lim_{\e\to 0} \e\,\delta_\e^{-1/\beta}=0,\]
we have that
\[\nu_\e\rightharpoonup \delta_0,\ \ \ \ \text{as}\ \e\downarrow 0.\]

 The purpose of this paper is to quantify the rate of this convergence through a large deviations principle. To state the main result, we first recall the definition of the large deviations principle. Here we give the Freidlin-Wentcell formulation.

\begin{definition}
	Let $E$ be a Banach space. Suppose that $\{\mu_\e\}_{\e > 0}$ is a family of proability measures on $E$ and $I:E \to [0,+\infty]$ is a good rate function, meaning that for each $ s \geq 0$, the level set $\Phi(s):=\{h \in E:I(h)\leq s\}$ is a compact subset of $E$. The family $\{\mu_\e\}_{\e > 0}$ is said to satisfy a large deviations principle (LDP) in $E$, with rate function $I$, if the following hold.
	\begin{enumerate}[(i)]
		\item For every $s \geq 0$, $\delta > 0$ and $\gamma > 0$, there exists $\e_0 > 0$ such that
		\begin{equation*}
			\mu_\e(B_E(\varphi,\delta)) \geq \exp \left(-\frac{I(\varphi) + \gamma}{\e} \right),
		\end{equation*}
		for any $\e \leq \e_0$ and $\varphi \in \Phi(s)$, where $B_E(\varphi,\delta):= \{h\in E:\norm{h-\varphi} < \delta\}$.
		\item For every $s_0 \geq 0$, $\delta > 0$ and $\gamma > 0$, there exists $\e_0 > 0$ such that
		\begin{equation*}
			\mu_\e( B^c_E(\Phi(s),\delta)) \leq \exp \left( -\frac{s-\gamma}{\e}\right),
		\end{equation*}
		for any $\e \leq \e_0$ and $s \leq s_0$, where $B^c_E(\Phi(s),\delta) := \{h \in E: \mathrm{dist}_E(h,\Phi(s)) \geq \delta\}$.
\end{enumerate}	 
\end{definition}
\noindent The main result of this paper is the following.
\begin{theorem}\label{th_main}
	Assume that $Q_\e$ has the form given in \eqref{eq_Q_cov}, for some $\beta > 2$. Moreover, suppose that 
	\begin{equation*}
		\lim_{\e \to 0} \delta(\e) = 0,\ \ \ \ \lim_{\e \to 0} \e\, \delta(\e)^{-2/\beta} = 0.
	\end{equation*}
	Then the family of invariant measures $\{\nu_\e\}_{\e > 0}$ of equation $\eqref{eq_NS_good}$ satisfies a large deviations principle in $H$ with rate function given by
	\begin{equation}
	\label{fg}	
	U(x) =
			\begin{cases}
				\norm{x}_V^2, &  x \in V,
				\\[10pt] +\infty, &  x \in V \setminus H.
			\end{cases}	
	\end{equation}

\end{theorem}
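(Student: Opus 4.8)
The standard strategy for proving an LDP for invariant measures (going back to \cite{s2} and used in \cite{bc}, \cite{m}) rests on three ingredients: (a) a uniform-over-initial-conditions LDP (Freidlin–Wentzell / sample-path) for the solutions $u^x_\e$ on a fixed time interval $[0,T]$, with a rate function whose quasi-potential is the candidate $U$; (b) a characterization of the quasi-potential $U$ itself, which here is already handed to us by the combination of \cite{bc}, \cite{bcf}: $U_\delta(x)\to\|x\|_V^2$ pointwise, and crucially the clean closed form $U(x)=\|x\|_V^2$ arising from the orthogonality \eqref{eq_prop_non_Au}; and (c) good control on the return times of the process to small balls around the attractor $\{0\}$, together with exponential tightness / a Lyapunov-type estimate that confines the invariant measure. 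I would organize the proof around exactly these pieces, handling the simultaneous $\e,\delta(\e)\to 0$ limit as the new difficulty.

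\emph{Step 1: Uniform sample-path LDP.} Following the plan the authors announce in the introduction, I would first establish an LDP for the linearized stochastic Stokes equation $dz_\e + Az_\e\,dt = \sqrt{\e Q_\e}\,dw$ via the weak convergence / variational method of \cite{bdm}; the key point is that, by \eqref{eq_Q_reg} and the assumption $\e\,\delta(\e)^{-2/\beta}\to0$, the scaled noise still produces an $H$-valued (indeed $V^{\text{something}}$-valued) limit, and the skeleton equation is $\dot z + Az = g$ for controls $g\in L^2(0,T;H)$, giving rate function $\tfrac12\int_0^T\|z'+Az\|_H^2$. Then I would transfer this to the full nonlinear equation $u_\e = z_\e + v_\e$, where $v_\e$ solves a random PDE with the regularity of $z_\e$ as data, using continuity of the solution map (a contraction-principle argument, uniform on bounded sets of initial data), to obtain the Freidlin–Wentzell LDP with rate
\[
I^x_T(u) = \frac12\int_0^T\norm{u'(t)+Au(t)+B(u(t))}_H^2\,dt,
\]
uniform over $x$ in bounded subsets of $H$ — more precisely, I would need the two one-sided uniform bounds (lower bound uniform over compact sets of initial data near $0$, upper bound uniform over bounded sets) exactly in the form used in \cite{s2}, \cite{bc}.

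\emph{Step 2: From the quasi-potential to $U$.} I would record that the associated quasi-potential $V_\e(x) = \inf\{I^x_T(u): T>0, u(0)=x, u(T)=0\}$ and its "dual" $\tilde U_\e(x) = \inf\{I^0_T(u): u(0)=0, u(T)=x\}$ satisfy, by the orthogonality $\langle B(u),Au\rangle_H=0$, the identity $\tilde U_\e(x)=\|x\|_V^2$ along the straight-line-in-time gradient flow reversal, exactly as in \cite{bcf}; this is where \eqref{eq_prop_non_Au} enters and why the result is torus-specific. One checks $U(x)=\|x\|_V^2$ is a good rate function on $H$ (its sublevel sets are bounded in $V$, hence compact in $H$) and that $U(x)=+\infty$ for $x\notin V$.

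\emph{Step 3: Lower bound for $\nu_\e$.} Using ergodicity, write $\nu_\e(B) = \lim_{T\to\infty}\tfrac1T\int_0^T \mathbbm 1_B(u^0_\e(t))\,dt$ and hence $\nu_\e(B_H(\varphi,\delta)) \ge (\text{const})\,\P(u^0_\e \text{ reaches } B_H(\varphi,\delta/2) \text{ by time } T_\varphi)$ for a well-chosen $T_\varphi$; then apply the uniform sample-path lower bound from Step 1 to a near-optimal control steering $0$ to $\varphi$, giving $\liminf \e\log\nu_\e(B_H(\varphi,\delta)) \ge -U(\varphi)-\gamma$. The wrinkle compared to \cite{bc} is that the noise regularity degenerates, so I must verify the controllability/reachability estimates still hold with constants that do not blow up too fast as $\delta(\e)\to0$ — the hypothesis $\beta>2$ and $\e\,\delta(\e)^{-2/\beta}\to0$ should be precisely what makes the relevant noise "energy" $\e\,\mathrm{Tr}\,Q_\e \lesssim \e\,\delta(\e)^{-1/\beta} \to 0$ and the $V$-moment bound \eqref{ns1}–\eqref{ns2} usable.

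\emph{Step 4: Upper bound for $\nu_\e$.} This is the harder direction and the main obstacle. One needs: (i) an a priori exponential estimate confining $\nu_\e$ — something like $\nu_\e(\|x\|_V > R) \le \exp(-c(R)/\e)$ with $c(R)\to\infty$ — derived from a uniform (in $\e$) exponential energy/Lyapunov estimate for $u^x_\e$, using It\^o's formula on $\|u\|_V^2$ together with $\langle B(u),Au\rangle_H=0$ and the exponential martingale inequality, and controlling the noise term via \eqref{eq_Q_reg}; (ii) a uniform-in-$x$ estimate on the probability of \emph{not} returning to a small neighborhood of $0$ within a fixed time, again via the sample-path upper bound; and (iii) a patching argument (as in \cite{s2}) combining these: decompose the time axis into excursions, use the return estimate to bound the fraction of time spent outside $B^c_H(\Phi(s),\delta)$, and use ergodicity to convert this into the measure bound $\nu_\e(B^c_H(\Phi(s),\delta)) \le \exp(-(s-\gamma)/\e)$. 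The delicate modification relative to \cite{bc} is that every constant coming from the noise (in the Lyapunov estimate, in the exponential martingale bound, in the reachability estimate) now depends on $\e$ through $\delta(\e)$, and one must track that these dependencies are dominated by the hypotheses $\lim\e\,\delta(\e)^{-2/\beta}=0$ and $\beta>2$; in particular the threshold $\beta>2$ (rather than $\beta>1$, which only gives trace class) is what is needed to get the $V$-level (not just $H$-level) exponential moment that matches the rate function $\|x\|_V^2$. I would expect to spend most of the work making Step 4(i) and the patching in 4(iii) robust to the vanishing correlation.
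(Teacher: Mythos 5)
Your plan follows essentially the same route as the paper: an LDP for the Ornstein--Uhlenbeck process via the weak convergence method, transferred to the Navier--Stokes dynamics by a uniform contraction principle to obtain a Freidlin--Wentzell (and, over compacts, Dembo--Zeitouni) uniform path LDP; the quasi-potential identity $U(x)=\norm{x}_V^2$ from the orthogonality \eqref{eq_prop_non_Au}; and the Sowers-type scheme for the invariant measures, with the $V$-level exponential estimate (where $\beta>2$ and $\e\,\delta(\e)^{-2/\beta}\to 0$ enter exactly through $\sum_{k}|k|^2\sigma_{\e,k}^2\lesssim \delta(\e)^{-2/\beta}$) and the Markov/patching argument for the upper bound. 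The only real difference is cosmetic, in the lower bound: the paper uses the invariance identity $\nu_\e(B)=\int_H \P(u^y_\e(T)\in B)\,d\nu_\e(y)$ combined with the uniform lower bound over $y\in B_H(0,R)$ (first letting the unforced flow carry $y$ near $0$), whereas your ergodic-average/hitting-probability reduction would need a small renewal-type argument to be made literal, since a single excursion into $B_H(\varphi,\delta)$ contributes a vanishing fraction of time.
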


We remark here that the rate function, $U(x)$, is really the quasipotential corresponding to equation $\eqref{eq_NS_good}$, whose definition is given in equation $\eqref{eq_quasi}$. The quasi-potential has the explicit representation given in  \eqref{fg} in the case  the problem is posed on a torus. That formula does not hold in general for the problem posed on a bounded domain with Dirichlet boundary conditions.

\section{Large deviation principle for the paths}
The proof of Theorem \ref{th_main} requires a large deviations principle for the solutions to equation $\eqref{eq_NS_good}$. One such large deviations principle  is proven in \cite{cd}, but here we have to  proceed differently  in order to obtain a result that is uniform with respect to initial conditions in bounded subsets of $H$. Unlike in \cite{cd}, we first prove a large deviation principle for the linearized Ornstein-Uhlenbeck process in the space $C([0,T];L^4)$, and then transfer it back to the appropriate Navier-Stokes process by means of the contraction principle. 

\subsection{LDP for the Ornstein-Uhlenbeck process} Assume that $Q_\e$ has the form given in $\eqref{eq_Q_cov}$, for some $\beta > 0$. For every $\e>0$, let $z_\e$ denote the  mild solution to the equation
\begin{equation}\label{eq_OU_process}
	\begin{cases}
			dz_\e + Az_\e dt = \sqrt{\e\,Q_\e}\, dw(t),
			\\[10pt] z_\e(0) = 0.
	\end{cases}
\end{equation}
It is well-known that  $z_\e$ is given by the stochastic convolution
\begin{equation*}
	z_\e(t) = \int_0^t S(t-s)\sqrt{\e\,Q_\e}\, dw(s), \ \ \ \  t \geq 0,
\end{equation*}
where $\{S(t)\}_{t \geq 0}$ is the analytic semigroup generated by the operator $-A$ on $H$. It can be shown that $z_\e \in L^p(\Omega;C([0,T];V^r))$, for any $r < \beta$ and $p\geq 1$ (e.g. see \cite{dz2}). In this subsection, we  prove that the family $\{z_\e\}_{\e > 0}$ satisfies a large deviations principle in $C([0,T];L^4)$. To do so, we first prove that the stochastic convolution $z_\e$ converges to $0$ in $L^p*(\Omega;C([0,T];L^p))$, as $\epsilon \downarrow 0$, for every $p\geq 1$.
\begin{lemma}\label{lem_convolution_to0}
	For any $\e > 0$, the process $z_\e$ has trajectories in $C([0,T];L^p)$, $\P$-a.s. for any $p \in [1,\infty)$. Moreover, 
	\begin{equation}
	\label{eq_Lp_limit}
		\lim_{\e \to 0} \e \log \frac 1{\delta(\e)}= 0\ \Longrightarrow\  \lim_{\e \to 0} \E \sup_{t  \in\,[0,T]} \norm{z_\e(t)}_{L^p}^p = 0.
	\end{equation}	
\end{lemma}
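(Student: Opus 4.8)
The plan is to bound the $L^p$-norm of the stochastic convolution $z_\e$ by a single universal quantity — the square of a suitable $\gamma$-norm — and then track the $\e$-dependence through the explicit formula \eqref{eq_Q_cov} for $Q_\e$. First I would recall that by the factorization method (or the standard maximal-inequality estimates for stochastic convolutions, as in \cite{dz2}), for every $p \geq 2$ and every $r < \beta$ there is a constant $c = c(p,r,T)$ with
\[
\E \sup_{t \in [0,T]} \norm{z_\e(t)}_{L^p}^p \leq c\,\E \sup_{t \in [0,T]} \norm{z_\e(t)}_{V^r}^p
\]
whenever $r$ is large enough that $V^r \hookrightarrow L^p$; in two dimensions $r > 1 - 2/p$ suffices, so in particular one may take any fixed $r \in (1,\beta)$ to handle all $p$ at once. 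Then, again by the factorization lemma, $\E \sup_t \norm{z_\e(t)}_{V^r}^p$ is controlled by a power of the Hilbert--Schmidt-type quantity
\[
C_\e := \int_0^T \norm{A^{\theta} S(t) \sqrt{\e\,Q_\e}}_{\mathcal{L}_2(H;V^r)}^2\,dt
\]
for an appropriate $\theta \in (0,1/2)$; the precise statement is that $\E \sup_t \norm{z_\e(t)}_{V^r}^p \leq c(p)\,C_\e^{p/2}$. So it all comes down to showing $C_\e \to 0$.

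The computation of $C_\e$ is a diagonal sum over $k \in \Z20$. Using $Ae_k = |k|^2 e_k$, $S(t)e_k = e^{-|k|^2 t}e_k$, and $\sqrt{Q_\e}\,e_k = (1 + \delta(\e)|k|^{2\beta})^{-1/2}e_k$, one gets
\[
C_\e = \e \sum_{k \in \Z20} \frac{|k|^{2r + 4\theta}}{1 + \delta(\e)|k|^{2\beta}} \int_0^T e^{-2|k|^2 t}\,t^{-2\theta}\,dt
\;\leq\; c_\theta\,\e \sum_{k \in \Z20} \frac{|k|^{2r + 4\theta - 2 + 4\theta}}{1 + \delta(\e)|k|^{2\beta}},
\]
after evaluating $\int_0^\infty e^{-2|k|^2 t} t^{-2\theta}\,dt = c_\theta |k|^{-2(1-2\theta)}$. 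Choosing $r$ and $\theta$ so that the numerator exponent $2r + 8\theta - 2$ is strictly less than $2\beta - 2$ (possible since $r < \beta$ and $\theta$ may be taken small), the summand behaves like $\e\,\delta(\e)^{-1}\,|k|^{2r+8\theta-2-2\beta}$ for large $|k|$, which is summable, and splitting the sum at the scale $|k| \sim \delta(\e)^{-1/(2\beta)}$ shows $C_\e \leq c\,\e\,\delta(\e)^{-(r + 4\theta)/\beta - \text{(small)}}\,$, up to logarithmic corrections. The key point is that the exponent of $\delta(\e)^{-1}$ can be made as close to $0$ as desired by shrinking $r$ and $\theta$ — but this forces $r$ small, which conflicts with needing $V^r \hookrightarrow L^p$ for large $p$. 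This is the crux, and it is exactly why the borderline hypothesis $\e\log(1/\delta(\e)) \to 0$ appears: the honest estimate, carried out carefully, yields something like $C_\e \leq c\,\e\big(1 + \log(1/\delta(\e))\big)$ at the critical choice of parameters, rather than a genuine power of $\delta(\e)$.

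The main obstacle, therefore, is the tension between Sobolev embedding (which wants $r$ large) and the summability of the series (which is cheap only when $r + 4\theta$ is small relative to $\beta$). The way through is to not fix $p$ and $r$ together: for a given target $p$, the required regularity $r_p > 1 - 2/p$ stays bounded away from $\beta$ (since $\beta > 2$ by hypothesis, even $p = \infty$ would want $r$ near $1 < \beta$), so one can always choose $r \in (1 - 2/p, \beta)$ and then optimize over $\theta$; the surplus $\beta - r > 0$ is what makes the exponent of $\e$ strictly positive and the exponent of $\delta(\e)^{-1}$ sub-logarithmic, giving $C_\e \to 0$ under $\e\log(1/\delta(\e)) \to 0$. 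Once $C_\e \to 0$ is established for the relevant $r$, the chain of inequalities above gives $\E \sup_t \norm{z_\e(t)}_{L^p}^p \to 0$ for every $p \in [1,\infty)$, using Jensen to pass from large $p$ down to small $p$. The almost-sure path continuity in $C([0,T];L^p)$ follows from the same factorization representation, which exhibits $z_\e$ as a continuous $V^r$-valued — hence $L^p$-valued — process for each fixed $\e$.
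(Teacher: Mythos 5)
Your approach has a genuine gap: the passage through a Sobolev embedding $V^r \hookrightarrow L^p$ cannot produce the logarithmic dependence on $\delta(\e)$ that the hypothesis $\e \log(1/\delta(\e)) \to 0$ is calibrated to. Since you need $r > 1 - 2/p$ (and, for the factorization method, $\theta > 1/p$), the exponent $a := r+4\theta$ in your series is bounded below by $1 + 2/p > 1$, and for $0 < a < \beta$ one has
\begin{equation*}
	\e \sum_{k \in \Z20} \frac{|k|^{2a-2}}{1+\delta(\e)|k|^{2\beta}} \;\asymp\; \e\, \delta(\e)^{-a/\beta},
\end{equation*}
a genuine positive power of $\delta(\e)^{-1}$, not a logarithm; the logarithmic case occurs only at $a=0$, which is incompatible with both the embedding and the factorization constraints. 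Your closing claim that ``the honest estimate, carried out carefully, yields $C_\e \leq c\,\e(1+\log(1/\delta(\e)))$ at the critical choice of parameters'' is therefore false. Concretely, with $\delta(\e) = \exp(-\e^{-1/2})$ the hypothesis of the lemma holds ($\e\log(1/\delta(\e)) = \sqrt{\e} \to 0$), yet $\e\,\delta(\e)^{-a/\beta} \to \infty$ for every $a>0$, so your chain of inequalities does not give the conclusion. (A secondary issue: the lemma is stated in a section assuming only $\beta>0$, while your route needs $r \in (1-2/p,\beta)$, hence essentially $\beta \geq 1$ to cover all $p$; the $\beta>2$ hypothesis you invoke belongs to the main theorem, not to this lemma.)

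The paper avoids this loss entirely by never leaving $L^p$: it exploits the uniform boundedness of the basis functions $e_k$ on the torus, writes $\norm{z_\e(t)}_{L^p}^p$ as an integral over $x \in \T2$ of a scalar stochastic convolution, and applies a Burkholder--Davis--Gundy type bound pointwise in $x$. The resulting series is
\begin{equation*}
	\sum_{k \in \Z20} \frac{1}{|k|^2\big(1+\delta(\e)|k|^{2\beta}\big)} \;\leq\; \frac{1}{\beta}\log\frac{1}{\delta(\e)} + \frac{1}{\beta},
\end{equation*}
i.e.\ only logarithmically divergent in $\delta(\e)$, because no positive power of $|k|$ is spent on regularity; this is exactly what matches the hypothesis and yields $\E\sup_t \norm{z_\e(t)}_{L^p}^p \leq c_p\,(\e\log(1/\delta(\e)) + \e)^{p/2} \to 0$. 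If you want to salvage your strategy, you would have to replace the Sobolev-embedding step by an estimate that measures the noise directly in $L^p$ (or interpolate in a way that costs no positive power of $|k|$), which is precisely what the paper's pointwise-in-$x$ argument accomplishes.
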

\begin{proof}
	Fix any $p < \infty$. Thanks to the Burkholder-Davis-Gundy inequality and the uniform boundedness of the basis $\{e_k\}_{k \in \Z20}$, we have
	\begin{align*}
		\E \sup_{0 \leq t \leq T} \norm{z_\e(t)}_{L^p}^p & =   \e^{p/2}\, \E \sup_{t \in\,[0,T]} \norm{\int_0^t S(t-s)\sqrt{Q_\e}dw(s)}_{L^p}^p
		\\ & \leq \e^{p/2}\, \int_{\T2} \E \sup_{t \in\,[0,T]} \Big| \int_0^t\sum_{k \in \Z20} e^{-|k|^2(t-s)}\sigma_{\delta(\e),k} e_k(x)d\beta_k(s)\Big|^p dx 
		\\ & \leq c_p \ \e^{p/2}\, \int_{\T2} \Big(\sum_{k \in \Z20} \sigma_{\delta(\e),k}^2|e_k(x)|^2\int_0^T e^{-2|k|^2s}ds \Big)^{p/2}dx 
		\\ & \leq c_p\, \e^{p/2}\, \Big( \sum_{k \in \Z20} \frac{1}{|k|^2(1+\delta(\e) |k|^{2\beta})} \Big)^{p/2}<\infty.
	\end{align*}
	 Therefore,  $\eqref{eq_Lp_limit}$ follows by noting that
	\begin{align*}
		\sum_{k \in \Z20} \frac{1}{|k|^2(1+\delta(\e) |k|^{2\beta})} & \leq \int_1^\infty \frac{1}{r(1+\delta(\e) r^{\beta})}dr = \int_{\delta(\e)^{1/(\beta)}}^\infty \frac{1}{r(1+r^{\beta})}dr  
		\\ & \leq \int_{\delta(\e)^{1/(\beta)}}^1 \frac{dr}{r}  + \int_1^\infty \frac{dr}{r^{\beta+1}} \leq \frac{1}{\beta} \log \frac{1}{\delta(\e)} + \frac{1}{\beta}.
	\end{align*}	
\end{proof}
To prove that the family $\{z_\e\}_{\e > 0}$ satisfies a large deviations principle in $C([0,T];L^4)$ we use the weak convergence approach, as developed for SPDEs in \cite{bdm}. This approach involves proving convergence of the solutions to a sequence of controlled versions of the equations. For $\varphi \in L^2(\Omega;L^2(0,T;H))$, we denote by $z_{\e,\varphi}$ the solution to the equation
	\begin{equation*}
		dz_{\e,\varphi}(t) + Az_{\e,\varphi}(t)\,dt = \sqrt{\e\,Q_\e}\,dw(t) + \sqrt{Q_\e}\, \varphi(t)\,dt,\ \ \ \ z_{\e,\varphi}(0) = 0,	\end{equation*}
	and we denote by $z_\varphi$ the solution to the so-called skeleton equation
	\begin{equation}\label{eq_NS_forced_determ}
		\frac{d z_{\varphi}}{dt}(t) + Az_{\varphi}(t) =  \varphi(t),\ \ \ \ \ 
			z_\varphi(0) = 0.
	\end{equation}
Theorem 6 of \cite{bdm} implies the following result.
\begin{theorem}\label{th_laplace}
The family $\{\mathcal{L}(z_\e) \}_{\e > 0}$ satisfies a large deviations principle in $C([0,T];L^4)$, with  rate function
	\begin{equation}\label{eq_actionOU_bad}
		J_T(z) = \frac{1}{2}  \inf \Big\{ \int_0^T \norm{\varphi(t)}_H^2dt\ :\ \varphi \in L^2(0,T;H), z = z_\varphi \Big\},
	\end{equation}
	if the following two conditions hold for any $M \in [0,\infty)$.
	\begin{enumerate}[(i)]
		\item The set
		\begin{equation*}
			\Phi(M):= \Big\{\,z \in C([0,T];L^4)\, :\, z = z_\varphi,\hs \varphi \in L^2(0,T;H), \ \frac{1}{2}\int_0^T \norm{\varphi(t)}_H^2dt \leq M\,\Big\}
		\end{equation*}
		is a compact subset of $C([0,T];L^4)$.
		\item For every  $\{\varphi_\e\}_{\e \geq 0}\subset   L^2(\Omega;L^2(0,T;H))$, such that 
		\begin{equation}
				\label{sn11}
            \sup_{\e \in\,(0,1)}\frac{1}{2} \int_0^T \norm{\varphi_\e(t)}_H^2dt \leq M, \ \P-\text{a.s.},
		\end{equation}
		if $\varphi_\e$ converges to  $\varphi_0$ in distribution with respect to the weak topology of $L^2(0,T;H)$, as $\e \downarrow 0$, then $z_{\e,\varphi_\e}$ converges to $z_{\varphi_0}$ in distribution in $C([0,T];L^4)$, as $\e \downarrow 0$.
	\end{enumerate}
	
\end{theorem}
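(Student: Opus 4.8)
The statement is an instance of the general weak-convergence criterion for large deviations of small-noise SPDEs. The strategy is therefore not to reprove that criterion from scratch, but to reduce Theorem \ref{th_laplace} to an already-established abstract result --- Theorem~6 of \cite{bdm} --- by checking that our setup fits its hypotheses and that the two conditions (i)--(ii) stated here are exactly (or imply) the conditions required there. Concretely, the plan is: (1) observe that $z_\e$ is a measurable functional of the driving cylindrical Wiener process $w$, i.e. $z_\e = \mathcal{G}^\e(\sqrt{\e}\,w)$ for a measurable map $\mathcal{G}^\e : C([0,T];H_0) \to C([0,T];L^4)$ (here $H_0$ denotes the mean-zero subspace of $[L^2(\T2)]^2$, the state space of $w$), where $\mathcal{G}^\e$ is defined via the stochastic convolution; (2) identify the corresponding map on shifted paths, $\mathcal{G}^\e\big(\sqrt{\e}\,w + \int_0^\cdot \varphi(s)\,ds\big) = z_{\e,\varphi}$, which follows from the Cameron--Martin / Girsanov structure together with the explicit form of the controlled equation; (3) identify the zero-noise limit map $\mathcal{G}^0\big(\int_0^\cdot \varphi(s)\,ds\big) = z_\varphi$, the solution of the skeleton equation \eqref{eq_NS_forced_determ}; (4) invoke the equivalence, established in \cite{bdm}, between the Laplace principle (equivalently the LDP, since $C([0,T];L^4)$ is Polish and the rate function will be good) with rate function $J_T$ and the pair of statements that the level sets $\Phi(M)$ are compact (condition (i)) and that controlled solutions converge in distribution to skeleton solutions along weakly convergent controls (condition (ii)).

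\textbf{Key steps in order.} First I would make precise the two solution maps. The Ornstein--Uhlenbeck process solves a \emph{linear} equation, so $z_\e$ and $z_{\e,\varphi}$ and $z_\varphi$ are all given by explicit variation-of-constants formulas: $z_{\e,\varphi}(t) = \int_0^t S(t-s)\sqrt{\e Q_\e}\,dw(s) + \int_0^t S(t-s)\sqrt{Q_\e}\,\varphi(s)\,ds$ and $z_\varphi(t)=\int_0^t S(t-s)\varphi(s)\,ds$. Linearity is what makes the verification clean: the controlled process is literally $z_\e$ plus the deterministic convolution of the control, so the ``convergence of controlled solutions'' in (ii) decomposes into the noise part going to $0$ (handled, for the relevant topology, by the kind of estimate in Lemma \ref{lem_convolution_to0} and its $L^4$ analogue) plus continuity of the skeleton map under weak convergence of controls. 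Second, I would state the abstract theorem from \cite{bdm} in the form we need: for a family of measurable maps $\mathcal{G}^\e$ as above, if (a) for every $M$ the set $\{\,\mathcal{G}^0(\int_0^\cdot \varphi)\, :\, \tfrac12\int_0^T\|\varphi\|_H^2\,dt \le M\,\}$ is compact in the target space, and (b) the weak-convergence condition on controlled solutions holds, then $\{\mathcal{L}(z_\e)\}$ satisfies the Laplace principle with rate function $I(z) = \tfrac12\inf\{\int_0^T\|\varphi\|_H^2\,dt : z=\mathcal{G}^0(\int_0^\cdot\varphi)\}$, which is exactly $J_T$ in \eqref{eq_actionOU_bad}. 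Third, I would note that the Laplace principle with a good rate function on a Polish space is equivalent to the LDP (Varadhan / Bryc), and that goodness of $J_T$ is precisely the compactness in condition (i) (level sets of $J_T$ equal the sets $\Phi(M)$ up to the factor $1/2$ convention), so the theorem follows.

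\textbf{Main obstacle.} The real content is not in the abstract deduction --- that is a citation of \cite{bdm} plus the standard Laplace/LDP equivalence --- but in making sure the hypotheses of that abstract result are met with the \emph{correct} state spaces and topologies, in particular that the driving noise, which is only $H$-valued when $\beta>1$ and otherwise lives in a space of negative regularity, still fits the cylindrical-Wiener-process framework of \cite{bdm} with control space $L^2(0,T;H)$ (this is legitimate because the control enters through $\sqrt{Q_\e}$, which smooths, whereas the abstract framework only needs $w$ to be a cylindrical process on a Hilbert space --- here the mean-zero subspace of $[L^2(\T2)]^2$ --- and the controls to range over its Cameron--Martin-type space). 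A secondary point to be careful about is that \cite{bdm} is typically stated with a fixed functional $\mathcal{G}^\e$ and here $\mathcal{G}^\e$ genuinely depends on $\e$ through $Q_\e=Q_{\delta(\e)}$; one must use the version of the weak-convergence theorem that allows $\e$-dependent functionals (this is exactly the generality emphasized in \cite{bdm} for handling multiple-parameter limits), so that the limit identification in (ii) --- controlled solutions $z_{\e,\varphi_\e}$ of the \emph{$\e$-dependent} equation converging to the single skeleton solution $z_{\varphi_0}$ --- is the correct hypothesis. Once these framework issues are pinned down, the proof of Theorem \ref{th_laplace} is the two-line invocation sketched above; the substantive work is relegated to verifying (i) and (ii), which is carried out subsequently.
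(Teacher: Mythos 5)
Your proposal matches the paper exactly: the paper offers no separate proof of Theorem \ref{th_laplace}, stating it as a direct consequence of Theorem 6 of \cite{bdm} (the weak-convergence criterion with $\e$-dependent solution functionals), with the substantive verification of conditions (i) and (ii) deferred to the proof of Theorem \ref{th_OULDP}. Your additional bookkeeping --- identifying $\mathcal{G}^\e$, the controlled and skeleton maps, and the Laplace-principle/LDP equivalence --- is a correct and faithful unpacking of that citation, not a different route.
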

\noindent Thus, to prove the large deviations principle it remains to prove conditions (i) and (ii) in the above theorem. Note that condition (i) is precisely the statement that $J_T$ is a good rate function.
\begin{theorem}\label{th_OULDP}
	Assume that  
	\begin{equation*}
		\lim_{\e \to 0} \delta(\e) = 0,\ \ \ \ \lim_{\e \to 0} \e \log \frac{1}{\delta(\e)} = 0.
	\end{equation*} 
	Then the family $\{\mathcal{L}(z_\e) \}_{\e > 0}$ of solutions to equation $\eqref{eq_OU_process}$ satisfies a large deviations principle in $C([0,T];L^4)$ with rate function
	\begin{equation}\label{eq_actionOU_good}
		J_T(z) = 
		\begin{cases}
			\frac{1}{2} \ds \int_0^T\norm{z'(t)+Az(t)}_H^2dt & \text{ if } z \in W^{1,2}(0,T;H) \cap L^2(0,T;D(A)),
			\\ +\infty & \text{ otherwise}.
		\end{cases}		
	\end{equation}
\end{theorem}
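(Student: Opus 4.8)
The plan is to derive the result from Theorem \ref{th_laplace} by checking its two hypotheses and, separately, identifying the variational rate function \eqref{eq_actionOU_bad} with the explicit expression \eqref{eq_actionOU_good}. For the identification I would use that $-A$ generates an analytic semigroup on $H$ and $A$ is positive and self-adjoint, so that the skeleton equation \eqref{eq_NS_forced_determ} enjoys maximal $L^2$-regularity: the map $\varphi\mapsto z_\varphi$ is a bounded linear bijection of $L^2(0,T;H)$ onto $\{z\in W^{1,2}(0,T;H)\cap L^2(0,T;D(A)):z(0)=0\}$, with inverse $z\mapsto z'+Az$, and this space embeds continuously into $C([0,T];V)$. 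Consequently the constraint set in \eqref{eq_actionOU_bad} is the singleton $\{z'+Az\}$ exactly when $z$ lies in this regularity class (in particular $z(0)=0$) and is empty otherwise, which gives \eqref{eq_actionOU_good}.

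To verify hypothesis (i) --- equivalently, that $J_T$ is a good rate function --- take $z_n=z_{\varphi_n}\in\Phi(M)$. Boundedness of $\{\varphi_n\}$ in $L^2(0,T;H)$ gives, along a subsequence, $\varphi_n\rightharpoonup\varphi_0$ weakly there with $\frac12\int_0^T\norm{\varphi_0(t)}_H^2\,dt\le M$. By maximal regularity $\{z_n\}$ is bounded in $W^{1,2}(0,T;H)\cap L^2(0,T;D(A))\cap C([0,T];V)$, hence, using the two-dimensional Ladyzhenskaya inequality $\norm{f}_{L^4}\le c\,\norm{f}_H^{1/2}\norm{f}_V^{1/2}$ together with the bound on $z_n'$ in $L^2(0,T;H)$, it is equicontinuous in $C([0,T];L^4)$ and, at each fixed time, relatively compact in $L^4$ (compactness of $V\hookrightarrow L^4$ in dimension two). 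Arzelà--Ascoli then yields a further subsequence converging in $C([0,T];L^4)$, and passing to the limit in the linear mild formulation identifies the limit as $z_{\varphi_0}\in\Phi(M)$. Uniqueness of subsequential limits shows $\Phi(M)$ is compact.

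For hypothesis (ii), I would write $z_{\e,\varphi_\e}=z_\e+\widetilde z_{\e,\varphi_\e}$ by linearity, where $z_\e$ is the stochastic convolution of \eqref{eq_OU_process} and $\widetilde z_{\e,\varphi_\e}(t)=\int_0^t S(t-s)\sqrt{Q_\e}\varphi_\e(s)\,ds$. Since the standing hypothesis is exactly $\e\log(1/\delta(\e))\to0$, Lemma \ref{lem_convolution_to0} together with the embedding $C([0,T];L^p)\hookrightarrow C([0,T];L^4)$ for $p\ge4$ gives $z_\e\to0$ in $L^4(\Omega;C([0,T];L^4))$, hence in probability. For the drift term, $\norm{\sqrt{Q_\e}}_{\mathcal L(H)}\le1$ makes $\{\sqrt{Q_\e}\varphi_\e\}$ bounded in $L^2(0,T;H)$ uniformly in $\e$ and $\omega$, so the ($\e$-independent) solution operator of $z'+Az=\,\cdot\,$ bounds $\widetilde z_{\e,\varphi_\e}$ uniformly in $W^{1,2}(0,T;H)\cap L^2(0,T;D(A))\cap C([0,T];V)$. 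Using the Skorokhod representation theorem (the weak topology of $L^2(0,T;H)$ being metrizable on the sublevel sets of the control cost) one may assume that $\varphi_\e\to\varphi_0$ and $z_\e\to0$ almost surely; since $\sqrt{Q_\e}\to I$ strongly on $H$ with $\norm{\sqrt{Q_\e}}_{\mathcal L(H)}\le1$, for a.e.\ $\omega$ one has $\sqrt{Q_\e}\varphi_\e(\omega)\rightharpoonup\varphi_0(\omega)$ weakly in $L^2(0,T;H)$, and the compactness-plus-linearity argument of part (i) then yields $\widetilde z_{\e,\varphi_\e}(\omega)\to z_{\varphi_0}(\omega)$ in $C([0,T];L^4)$. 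Hence $z_{\e,\varphi_\e}\to z_{\varphi_0}$ almost surely, in particular in distribution, in $C([0,T];L^4)$, which is hypothesis (ii).

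The routine parts are the maximal-regularity estimates and the compact Sobolev embedding; the point requiring care is that all bounds be uniform in $\e$ and that one pass to the limit in the linear equation for $\widetilde z_{\e,\varphi_\e}$ while the operator $\sqrt{Q_\e}$ is itself moving. This is where the product structure ``strongly convergent operators $\times$ weakly convergent controls'' must be exploited, and where compactness of $V\hookrightarrow L^4$ --- rather than a mere weak limit --- is needed to upgrade to convergence in $C([0,T];L^4)$; the assumption $\beta>0$ enters only through $\norm{\sqrt{Q_\e}}_{\mathcal L(H)}\le1$ and $\sqrt{Q_\e}\to I$ strongly. The convergence $z_\e\to0$ is not a genuine obstacle here, being supplied directly by Lemma \ref{lem_convolution_to0}.
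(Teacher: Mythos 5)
Your proposal is correct and follows the same overall architecture as the paper --- verify conditions (i) and (ii) of Theorem \ref{th_laplace}, treat the stochastic convolution with Lemma \ref{lem_convolution_to0}, pass the control term through the observation that $\sqrt{Q_\e}\,\varphi_\e \rightharpoonup \varphi_0$ (strongly convergent, uniformly bounded operators acting on weakly convergent, uniformly bounded controls), and identify the variational rate function \eqref{eq_actionOU_bad} with \eqref{eq_actionOU_good} through injectivity and regularity of $\varphi\mapsto z_\varphi$ --- but your compactness mechanism is genuinely different. The paper proves (i) by the factorization method: writing $z_\varphi=\Gamma_\zeta(Y_\zeta(\varphi))$ it bounds $\Phi(M)$ in $C^{\delta}([0,T];V^{\rho})$, which simultaneously gives compactness of $\Phi(M)$ in $C([0,T];L^4)$ and exhibits the solution map $\Gamma:L^2(0,T;H)\to C([0,T];L^4)$ as a compact operator; Step 2 then disposes of the drift term in one line, since compact operators map weakly convergent sequences to strongly convergent ones. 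You instead use maximal $L^2$-regularity of $-A$ (so $z_\varphi\in W^{1,2}(0,T;H)\cap L^2(0,T;D(A))\hookrightarrow C([0,T];V)$), the Ladyzhenskaya interpolation $\norm{f}_{L^4}\le c\norm{f}_H^{1/2}\norm{f}_V^{1/2}$ for equicontinuity, compactness of $V\hookrightarrow L^4$ at each fixed time, and Arzel\`a--Ascoli, and then rerun the same subsequence extraction in condition (ii). Both routes are sound: yours gives stronger spatial regularity and makes the identification of the rate function (including the implicit constraint $z(0)=0$, which the displayed formula \eqref{eq_actionOU_good} suppresses) completely transparent, while the paper's factorization argument is softer (no maximal regularity needed), yields H\"older-in-time bounds, and packages the weak-to-strong upgrade in (ii) as an abstract property of the compact operator $\Gamma$ rather than a repeated compactness argument. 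Two details worth spelling out in a full write-up: the a.s.\ weak convergence $\sqrt{Q_\e}\,\varphi_\e\rightharpoonup\varphi_0$ should be justified by duality, $\langle\sqrt{Q_\e}\varphi_\e-\varphi_0,h\rangle=\langle\varphi_\e,(\sqrt{Q_\e}-I)h\rangle+\langle\varphi_\e-\varphi_0,h\rangle$, using self-adjointness of $\sqrt{Q_\e}$ and the uniform bound from \eqref{sn11} (this is exactly how the paper argues); and after the Skorokhod representation you only need $z_\e\to0$ in probability in $C([0,T];L^4)$, which Lemma \ref{lem_convolution_to0} already supplies without including $z_\e$ in the representation.
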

\begin{proof}		
	In view of Theorem $\ref{th_laplace}$, it suffices to show that conditions (i) and (ii) in Theorem \ref{th_laplace} hold true. Equality of the rate functions defined in equations $\eqref{eq_actionOU_bad}$ and $\eqref{eq_actionOU_good}$ follows immediately from the fact that $z_\psi = z_\varphi$ implies that $\psi = \varphi$.
	
	\vs
	\noindent {\em Step 1.} We first verify condition (i). Suppose that $z \in \Phi(M)$, so that $z = z_\varphi$ for some $\varphi \in L^2(0,T;H)$ satisfying 
	\begin{equation}
	\label{sn10}
		\frac{1}{2}\int_0^T \norm{\varphi(t)}_H^2dt \leq M.
	\end{equation}
	For any $\zeta \in (0,1)$, the function $z_\varphi(t) = \int_0^t S(t-s) \varphi(s)ds$ can be rewritten as $z_\varphi = \Gamma_\zeta(Y_\zeta(\varphi))$ where
	\begin{equation*}
		\Gamma_\zeta(Y)(t):= c_\zeta \int_0^t (t-s)^{\zeta-1}S(t-s) Y(s)ds,
	\end{equation*}	
	and
	\begin{equation*}
		Y_\zeta(\varphi)(s):= \int_0^s (s-r)^{-\zeta}S(s-r)\varphi(r)dr.
	\end{equation*}
	It is possible to show that  for any $\zeta \in (0,\frac{1}{2})$, $p \geq 2 $, $\rho \in (0,1)$ and $\delta \in (0,\frac{1}{2})$ such that $ \ds \delta + \frac{\rho}{2} < \zeta - \frac{1}{p}$, 
	\begin{equation}\label{eq_Gamma_cont}
		\Gamma_\zeta:L^p(0,T;H) \to C^\delta([0,T];V^\rho),
	\end{equation}
	is a continuous linear mapping (see Appendix A of \cite{dz1}).	Moreover, by Young's inequality we have  that
	\begin{align}
		\nonumber \norm{Y_\zeta(\varphi)}_{L^p(0,T;H)}^p & = \int_0^T \norm{\int_0^s(s-r)^{-\zeta}S(s-r)\varphi(r)dr }_H^pdt
		\\[10pt] & \nonumber \leq \int_0^T \Big(\int_0^s(s-r)^{-\zeta} \norm{\varphi(r)}_Hdr \Big)^pdt 
		\\[10pt] & \label{eq_gamma_Lp} \leq \Big(\int_0^T t^{-\frac{2\zeta p }{p+2}}dt \Big)^\frac{p+2}{2}\norm{\varphi}_{L^2(0,T;H)}^p,
	\end{align}
	which is finite provided that $\zeta <\frac{1}{2} + \frac{1}{p}$. Hence $z \in C^{\delta}(0,T;V^{\rho})$ for any $\delta,\rho$ satisfying $\delta + \frac{\rho}{2} < \frac{1}{2}$. Moreover, thanks to \eqref{sn10}, 
	\begin{equation*}
		\norm{z}_{C^{\delta}(0,T;V^{\rho})} \leq c_{p,\delta,\rho} \sqrt{M},
	\end{equation*}
	so that $\Phi(M)$ is a bounded subset of $C^{\delta}(0,T;V^\rho)$ and thus a compact subset of $C([0,T];L^4)$.
	
		{\em Step 2.} Next, we verify condition (ii) in Theorem \ref{th_laplace}. Let $M > 0$ and let $\{\varphi_\e\}_{\e \geq 0}$ be a sequence in $L^2(\Omega;L^2(0,T;H))$ satisfying \eqref{sn11}. Thanks to the Skorohod theorem, there exists a probability space $(\bar{\Omega},\bar{\mathcal{F}},\{\bar{\mathcal{F}}_t\}_{t \geq 0},\bar{\P})$, a cylindrical Wiener process $\bar{w}(t)$, and collection $\{\bar{\varphi}_\e\}_{\e \geq 0}$ in $L^2(\bar{\Omega};L^2(0,T;H))$ such that $\varphi_\e$ and $\bar{\varphi}_\e$ have the same distributions and 
 \begin{equation*}
 	\lim_{\e\to 0}\bar{\varphi}_\e = \bar{\varphi}_0, \ \P-\text{a.s.},
\end{equation*} 
with respect to the weak topology of $L^2(0,T;H)$. If we show that $z_{\e,\bar{\varphi}_\e}$ converges to $z_{\bar{\varphi}_0}$ in $C([0,T];L^4)$, $\P$-a.s., then condition (ii) will follow. 

To simplify our notation, we dispense with the bars. Now, for any $t \geq 0$, we have
	\begin{equation*}
		\begin{aligned}
		z_{\e,\varphi_\e}(t) - z_\varphi(t) &= \sqrt{\e}\int_0^t S(t-s)\sqrt{Q_\e}\,dw(s) + \int_0^t S(t-s)\Big[\sqrt{Q_\e}\varphi_\e(s) - \varphi(s) \Big]ds\\[10pt]	
		&=:J_1^\e(t) + J_2^\e(t).
		\end{aligned}
 \end{equation*}
	Thanks to Lemma \ref{lem_convolution_to0}, we have
	\[\lim_{\e\to 0} \mathbb{E} \Vert J^\e_1\Vert^4_{C([0,T];L^4)}=0.\]
	 To handle the control terms, we observe that $\sqrt{Q_\e}\,\varphi_\e$ converges to $\varphi$ weakly in $L^2(0,T;H)$. Indeed, for any $h \in L^2(0,T;H)$, it follows that
	\begin{align*}
		\Big|\langle \sqrt{Q_\e}\varphi_\e - \varphi, h \rangle_{L^2(0,T;H)} \Big| & = \Big| \langle \varphi_\e, (\sqrt{Q_\e}-I) h \rangle_{L^2(0,T;H)} + \langle \varphi_\e - \varphi,h \rangle_{L^2(0,T;H)} \Big|
		\\[10pt]
		& \leq \sqrt{2M}\norm{(\sqrt{Q_\e}-I) h}_{L^2(0,T;H)} + \Big| \langle \varphi_\e - \varphi,h \rangle_{L^2(0,T;H)} \Big|,
	\end{align*}
	which converges to $0$ $\P-$a.s., as $\e \to 0$, since $Q_\e$ converges to $\mathbbm{1}$ pointwise in $H$ and $\varphi_\e$ converges to $\varphi$ weakly. Moreover, we already showed in Step 1 that the solution map $\Gamma:L^2(0,T;H) \to C([0,T];L^4)$ given by
	\begin{equation*}
		\Gamma(\varphi)(t) = \int_0^t S(t -s) \varphi(s)ds, 
	\end{equation*} 
	is a compact operator. Since compact operators map weakly convergent sequences to strongly convergent sequences, it follows 
	\[\lim_{\e \to 0}\ \Vert J_2\Vert_{C([0,T];L^4)}=0,\ \ \ \ \mathbb{P}-\text{a.s.}\] 
	\end{proof}

\subsection{Uniform LDP for the Navier-Stokes process}
To obtain a uniform large deviations principle for the solutions to equation $\eqref{eq_NS_good}$, we will apply the contraction principle to the large deviations principle for the solutions to equation $\eqref{eq_OU_process}$. 

For every $x \in\,H$, let  $\F_x: L^4(0,T;L^4) \to C([0,T];H)$ be the family of mappings that associate to any $z \in L^4(0,T;L^4)$  the solution to the equation
\begin{equation*}
	\begin{cases}
		du(t) + Au(t)dt + B\Big(u(t)+z(t)\Big)dt = 0,
		\\[10pt] u(0) = x \in H.
	\end{cases}
\end{equation*}
In particular, we see that $I+\mathcal{F}_x$ maps a trajectory of $z_\e$ to a trajectory of $u_\e^x$. Throughout the remainder, we will use the shorthand 
\begin{equation*}
	B_Y(y,r):= \{h \in Y:\norm{h-y}_Y < r\}, \ \ \ \ \  y \in Y, \ \ \  r > 0,
\end{equation*}
for the open ball in the Banach space $Y$, centered at $y$ and of radius $r$. When $y=0$ we will just write $B_Y(r)$.

The proof of the following result is from \cite{bl}. Here we give a brief sketch of it to emphasize the right dependence on the initial conditions.

\begin{lemma}\cite{bl}
\label{lem_NS_cont}
	For every fixed $T>0$ the mappings $\F_x:L^4(0,T;H)\to C([0,T];L^4)$ are locally Lipschitz continuous, uniformly over $x$ in bounded sets of $H$. That is, for any $r > 0$ and $ R> 0$, there exists a constant $L_{r,R} > 0$ such that
	\begin{equation}\label{sn22}
		\sup_{x \in B_H(r)}\,\norm{\F_x(f) - \F_x(g)}_{C([0,T];H)} \leq L_{r,R} \norm{f-g}_{L^4(0,T;L^4)},\ \ \ \ \ f,g \in B_{L^4(0,T;L^4)}(R).
	\end{equation}
\end{lemma}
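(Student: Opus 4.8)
The plan is to combine an a priori energy bound for the nonlinear part of the solution with a Gronwall estimate for the difference of two solutions. I would fix $r,R>0$, take $x\in B_H(r)$ and $f,g\in B_{L^4(0,T;L^4)}(R)$, set $u:=\F_x(f)$ and $\tilde u:=\F_x(g)$, and first establish that $u$ and $\tilde u$ belong to $C([0,T];H)\cap L^2(0,T;V)$ with
\[
\sup_{t\in[0,T]}\norm{u(t)}_H^2+\int_0^T\norm{u(t)}_V^2\,dt\le C(r,R),
\]
and likewise for $\tilde u$. This comes from testing $u'+Au+B(u+f)=0$ with $u$, expanding $B(u+f)=B(u)+B(u,f)+B(f,u)+B(f)$, and using the cancellations $\langle B(u),u\rangle_H=0$ and $\langle B(f,u),u\rangle_H=0$ that follow from \eqref{eq_prop_non_negs}; the two surviving trilinear terms are controlled, after integration by parts, by the $2$D Ladyzhenskaya inequality $\norm{\cdot}_{L^4}\le c\norm{\cdot}_H^{1/2}\norm{\cdot}_V^{1/2}$ (in the spirit of \eqref{eq_sob1}), and Young's inequality absorbs a fraction of $\norm{u}_V^2$ on the left, leaving a Gronwall coefficient that is a constant multiple of $\norm{f(t)}_{L^4}^4$, whose time integral is at most $R^4$. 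Gronwall then closes the bound with a constant depending only on $r$ and $R$.

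For the Lipschitz estimate I would set $v:=u-\tilde u$, so $v(0)=0$ and
\[
v'+Av+B(u+f)-B(\tilde u+g)=0 .
\]
Writing $\phi:=u+f$, $\psi:=\tilde u+g$, $\eta:=f-g$, and using the bilinearity identity $B(\phi)-B(\psi)=B(\phi-\psi,\phi)+B(\psi,\phi-\psi)$ together with $\phi-\psi=v+\eta$ and $\langle B(\psi,v),v\rangle_H=0$, testing with $v$ yields the energy identity
\[
\frac12\frac{d}{dt}\norm{v}_H^2+\norm{v}_V^2+\langle B(v,\phi),v\rangle_H+\langle B(\eta,\phi),v\rangle_H+\langle B(\psi,\eta),v\rangle_H=0 .
\]
Each of the three trilinear terms is rewritten via \eqref{eq_prop_non_negs} so that the derivative falls on $v$, and then bounded: the first by $c\,\norm{v}_H^{1/2}\norm{v}_V^{3/2}\norm{\phi}_{L^4}$, the other two by $c\,\norm{\eta}_{L^4}\norm{v}_V\norm{\phi}_{L^4}$ and $c\,\norm{\psi}_{L^4}\norm{v}_V\norm{\eta}_{L^4}$. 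Young's inequality absorbs the $\norm{v}_V^2$ contributions into the left-hand side and produces
\[
\frac{d}{dt}\norm{v}_H^2\le c\,\norm{\phi(t)}_{L^4}^4\,\norm{v}_H^2+h(t),
\]
where the a priori bound above gives $\int_0^T\norm{\phi(t)}_{L^4}^4\,dt\le C(r,R)$ and, by Cauchy--Schwarz in time, $\int_0^T h(t)\,dt\le C(r,R)\,\norm{f-g}_{L^4(0,T;L^4)}^2$. Since $v(0)=0$, Gronwall's lemma gives $\sup_{t\in[0,T]}\norm{v(t)}_H^2\le C(r,R)\,\norm{f-g}_{L^4(0,T;L^4)}^2$, which is exactly \eqref{sn22} with $L_{r,R}=C(r,R)^{1/2}$.

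The main obstacle is the bookkeeping in the nonlinear estimates: one must split every trilinear form so that all spatial derivatives land on the factor that lives in $V$ (on $u$ in the a priori step, on $v$ in the difference step), apply the $2$D interpolation inequality, and then choose the Young exponents so that simultaneously (a) the viscous term $\norm{v}_V^2$ is genuinely absorbed into the left-hand side, and (b) the resulting Gronwall coefficient and source term are integrable on $[0,T]$ — which is precisely where the a priori bound $u,\tilde u\in L^2(0,T;V)$ and the hypothesis $f,g\in L^4(0,T;L^4)$ enter (a pure $L^\infty$-in-time control of $f,g$ would not suffice, hence the locality in $R$). Once the estimates are arranged this way, the uniformity over $x\in B_H(r)$ is automatic, since every constant above depends on the data only through $\norm{x}_H\le r$ and $\norm{f}_{L^4(0,T;L^4)},\norm{g}_{L^4(0,T;L^4)}\le R$.
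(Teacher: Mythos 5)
Your proof is correct and follows essentially the same route as the paper: an energy (Gronwall) estimate for the difference $v=\F_x(f)-\F_x(g)$, combined with the a priori bound for $\F_x(f),\F_x(g)$ in $C([0,T];H)\cap L^2(0,T;V)$ that makes all constants depend only on $r$ and $R$ (the paper imports the difference inequality from \cite{bl} and proves the a priori bound in \eqref{sn20}). The only cosmetic difference is the Gronwall coefficient — you use $\norm{u+f}_{L^4}^4$, while the paper keeps $\norm{\F_x(g)}_V^2+\norm{g}_{L^4}^4$ — which amounts to an equivalent splitting of the trilinear terms.
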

\begin{proof}
	For every $ f,g \in B_{L^4(0,T;L^4)}(R)$, we define $u:= \F_x(f)-\F_x(g)$.
	By proceeding as in \cite{bl}, we have
\begin{align*}
		\norm{u(t)}_H^2 & + \int_0^t \norm{u(s)}_V^2 ds \leq c \norm{g-f}_{L^4(0,t;L^4)}^2 \Big[ \norm{\F_x(f)}_{L^\infty(0,T;H)} \norm{\F_x(f)}_{L^2(0,T;V)} 
		\\[10pt] & +  \norm{\F_x(g)}_{L^\infty(0,T;H)} \norm{\F_x(g)}_{L^2(0,T;V)}   + \norm{f}_{L^4(0,T;L^4)}^2 + \norm{g}_{L^4(0,T;L^4)}^2 \Big]
		\\[10pt] & + c\int_0^t \Big[\norm{ \F_x(g)(s)}_V^2 + \norm{g(s)}_{L^4}^4\Big] \norm{u(s)}_H^2 ds.
	\end{align*}
	Now, for an arbitrary $f \in\,L^4(0,T;L^4)$
	\begin{equation}
	\label{sn20}
	\begin{aligned}
		&\frac{1}{2}\norm{\F_x(f)(t)}_H^2 + \int_0^t \norm{\F_x(f)(t)}_V^2ds 
		\\[10pt]  &\leq \frac{1}{2}\norm{x}_H^2 + \int_0^t \Big[ |b(\F_x(f)(s),f(s),\F_x(f)(s))| + |b(f(s),f(s),\F_x(f)(s))| \Big] ds
		\\[10pt]  &\leq  
		 \frac{1}{2}\norm{x}_H^2 + \int_0^t \Big[ \norm{\F_x(f)(s)}_H^{1/2}\norm{\F_x(f)(s)}_V^{3/2}\norm{f(s)}_{L^4} + \norm{f(s)}_{L^4}^2 \norm{\F_x(f)(s)}_V \Big] ds
		\\[10pt]  &\leq \frac{1}{2}\norm{x}_H^2 + \frac{1}{2} \int_0^t \norm{\F_x(f)(s)}_V^2ds +c\int_0^t \norm{f(s)}_{L^4}^4 \norm{\F_x(f)(s)}_H^2 ds + c\norm{f}_{L^4(0,t;L^4)}^4,
	\end{aligned}	
	\end{equation}
which implies that
	\begin{equation*}
		\norm{\F_x(f)(t)}_H^2 + \int_0^t \norm{\F_x(f)(s)}_V^2 ds \leq (\norm{x}_H^2 + \norm{f}_{L^4(0,t;L^4)}^4) \exp \Big( \norm{f}_{L^4(0,t;L^4)}^4 \Big).
	\end{equation*}
	This implies that if $f, g \in\,B_{L^4(0,T;L^4)}(R)$, and $x \in\,B_H(R)$, there exists $L_{r, R}>0$ such that
	\begin{equation}\label{eq_L4_cont}
		\begin{array}{l}
		\ds{	\norm{u(t)}_H^2  + \int_0^t \norm{u(s)}_V^2 ds}\\[10pt]
		\ds{\leq L_{r, R}  \norm{g-f}_{L^4(0,t;L^4)}^2 \exp \Big( c\int_0^t \Big[\norm{ \F_x(g)(s)}_V^2 + \norm{g(s)}_{L^4}^4\Big]ds\Big).}
		\end{array}
			\end{equation}
	By using again \eqref{sn20} to estimate $\F_x(g)$, we obtain \eqref{sn22}
\end{proof}

In the proof of the main result, Theorem \ref{th_main}, two different non-equivalent formulations of the uniform large deviations principle will be required. A thorough comparative analysis of the different formulations of uniform LDPs is given in the paper \cite{s1}. We state the definitions for the two forms needed in this paper, using the same notations and conventions as in \cite{s1}. This first definition can also be found in \cite{fw}.
\begin{definition}
	Let $E$ be a Banach space and let $D$ be some non-empty set. Suppose that for each $x \in D$,  $\{\mu_{\e}^x\}_{\e > 0}$ is a family of probability measures on $E$ and $I^x:E \to [0,+\infty]$ is a good rate function. The family $\{\mu_\e^x\}_{\e > 0}$ is said to satisfy a Freidlin-Wentzell uniform large deviations principle in $E$ with rate functions $I^x$, uniformly with respect to  $x \in\,D$, if the following statement holds.
	\begin{enumerate}[(i)]
		\item For any $s\geq 0$, $\delta > 0$ and $\gamma >0$, there exists $\e_0 > 0$ such that
		\begin{equation*}
			\inf_{x \in\,D}\left(\mu_\e^x( B_E(\varphi,\delta)) - \exp \Big( -\frac{I^x(\varphi) + \gamma}{\e}\Big)\right)\geq 0,\ \ \ \ \ \e \leq \e_0,
		\end{equation*}
		 for any  $\varphi \in \Phi^x(s)$, where $\Phi^x(s) := \{h \in E:I^x(h) \leq s\}$.
		\item For any $s_0 \geq 0$, $\delta > 0$ and $\gamma > 0$, there exists $\e_0 > 0$ such that
		\begin{equation*}
			\sup_{x \in\,D}\mu_\e^x(B^c_E(\Phi^x(s),\delta)) \leq \exp \Big( - \frac{s-\gamma}{\e}\Big),\ \ \ \e \leq \e_0,
		\end{equation*}
		for any $s \leq s_0$, where 
		\[B^c_E(\Phi^x(s),\delta) = \{h \in E: \mathrm{dist}_E(h,\Phi^x(s)) \geq \delta\}.\]
	\end{enumerate}
\end{definition}

We will use the following version of the contraction principle.
\begin{theorem}[Uniform Contraction Principle]\label{th_contraction}
	Assume the family of measures $\{\nu_\e\}_{\e > 0}$ satisfies a large deviations principle on Banach space $F$ with good rate function $J:F \to [0,+\infty]$. Suppose that $\{G^x\}_{x \in D}$ is a family of continuous mappings from $F$ to a Banach space $E$. Moreover, assume that $G^x$ are Lipschitz continuous, uniformly over $x \in D$, i.e. 
\begin{equation*}
	\sup_{x \in D} \sup_{\varphi_1 \neq \varphi_2} \frac{ \norm{ G^x(\varphi_1)-G^x(\varphi_2)}_E}{\norm{\varphi_V-\varphi_2}_F} =: L < \infty.
\end{equation*}
Then the family of push-forward measures $\{\mu_\e^x\}_{\e > 0}$ defined by $\mu_\e^x(\cdot):= \nu_\e((G^x)^{-1}(\cdot))$ satisfy a Freidlin-Wentzell uniform large deviations principle in $E$ with rate functions $I^x$ uniformly with respect to  $x \in\,D$, where $I^x$ is given by
\begin{equation*}
	I^x(\varphi) := \inf \{ J(\psi): \psi \in F, \ \varphi = G^x(\psi)\}.
\end{equation*}
\end{theorem}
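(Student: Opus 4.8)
The plan is to deduce the two clauses of the Freidlin--Wentzell uniform LDP for $\{\mu_\e^x\}_{\e>0}$ directly from the (non-uniform) LDP for $\{\nu_\e\}_{\e>0}$, exploiting the fact that the $L$-Lipschitz bound on $G^x$ is uniform in $x\in D$. The starting point is the observation that pushing a ball forward commutes nicely with $G^x$: since $G^x$ is $L$-Lipschitz, $(G^x)^{-1}(B_E(G^x(\psi),\delta)) \supseteq B_F(\psi,\delta/L)$, and since $G^x$ is continuous, $(G^x)^{-1}(B_E^c(\Phi^x(s),\delta))$ is contained in a set that, away from the sublevel sets of $J$, is controlled by $J$. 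I would first record the elementary relation between the rate functions: because $I^x(\varphi) = \inf\{J(\psi):\varphi = G^x(\psi)\}$ and $G^x$ is Lipschitz, one has $G^x(\Phi_J(s)) \subseteq \Phi^x(s)$ and, conversely, every $\varphi\in\Phi^x(s)$ with $I^x(\varphi)\le s$ is approximated to within $\gamma$ by the image of some $\psi$ with $J(\psi)\le I^x(\varphi)+\gamma/2$. (For the lower bound it is enough to have a near-optimal preimage, not an exact one.)

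For the \emph{lower bound} (clause (i)): fix $s,\delta,\gamma$. Given $x\in D$ and $\varphi\in\Phi^x(s)$, pick $\psi\in F$ with $G^x(\psi)=\varphi$ and $J(\psi)\le I^x(\varphi)+\gamma/2$; such $\psi$ exists up to an arbitrarily small error, and one checks that $J(\psi)\le I^x(\varphi)+\gamma/2 \le s+\gamma/2$, so $\psi\in\Phi_J(s+\gamma/2)$. By the Lipschitz property, $B_F(\psi,\delta/L)\subseteq (G^x)^{-1}(B_E(\varphi,\delta))$, hence
\[
\mu_\e^x(B_E(\varphi,\delta)) = \nu_\e\big((G^x)^{-1}(B_E(\varphi,\delta))\big) \ge \nu_\e\big(B_F(\psi,\delta/L)\big).
\]
Now apply clause (i) of the LDP for $\nu_\e$ with parameters $s' = s+\gamma/2$, $\delta' = \delta/L$, $\gamma' = \gamma/2$: there is $\e_0$, \emph{independent of $x$ and $\psi$}, such that $\nu_\e(B_F(\psi,\delta/L)) \ge \exp(-(J(\psi)+\gamma/2)/\e) \ge \exp(-(I^x(\varphi)+\gamma)/\e)$ for $\e\le\e_0$. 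Taking the infimum over $x\in D$ gives exactly clause (i). The crucial point is that the $\e_0$ furnished by the non-uniform LDP for $\nu_\e$ depends only on $(s',\delta',\gamma')$, and those are fixed once $(s,\delta,\gamma,L)$ are fixed — this is what converts the non-uniform input into a uniform output.

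For the \emph{upper bound} (clause (ii)): fix $s_0,\delta,\gamma$ and $s\le s_0$. The set $\Phi_J(s)$ is compact in $F$, and $G^x(\Phi_J(s))\subseteq\Phi^x(s)$, so if $\mathrm{dist}_E(G^x(\psi),\Phi^x(s))\ge\delta$ then in particular $\mathrm{dist}_E(G^x(\psi),G^x(\Phi_J(s)))\ge\delta$; by the Lipschitz bound this forces $\mathrm{dist}_F(\psi,\Phi_J(s))\ge\delta/L$. Consequently
\[
(G^x)^{-1}\big(B_E^c(\Phi^x(s),\delta)\big) \subseteq B_F^c(\Phi_J(s),\delta/L),
\]
uniformly in $x\in D$, so $\mu_\e^x(B_E^c(\Phi^x(s),\delta)) \le \nu_\e(B_F^c(\Phi_J(s),\delta/L))$. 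Applying clause (ii) of the LDP for $\nu_\e$ with $s_0' = s_0$, $\delta' = \delta/L$, $\gamma' = \gamma$ yields $\e_0$ independent of $x$ and of $s\le s_0$ such that the right-hand side is $\le \exp(-(s-\gamma)/\e)$ for $\e\le\e_0$; taking the supremum over $x\in D$ gives clause (ii). One should also note that each $I^x$ is a good rate function: $\Phi^x(s)$ is the image under the continuous map $G^x$ of a set contained in the compact $\Phi_J(s)$ and is closed (this needs a small argument using lower semicontinuity of $I^x$, which follows from goodness of $J$ and continuity of $G^x$), hence compact.

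The main obstacle is purely bookkeeping rather than conceptual: making sure that all the $\e_0$'s produced along the way depend only on the parameters $(s,\delta,\gamma)$ and the global Lipschitz constant $L$, and not on the individual $x$, $\varphi$, or $\psi$ — i.e. that the single non-uniform LDP for $\nu_\e$ is invoked once with fixed parameters in each clause. A secondary technical point, worth stating carefully but routine, is the near-optimal-preimage argument in clause (i): since $I^x$ is defined by an infimum, one cannot assume the infimum is attained, so one must carry the $\gamma/2$ slack through and absorb it at the end.
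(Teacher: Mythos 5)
Your proposal is correct and follows essentially the same route as the paper's proof: the lower bound via a near-optimal preimage $\psi$ with $J(\psi)\le I^x(\varphi)+\gamma/2$ and the inclusion $B_F(\psi,\delta/L)\subseteq (G^x)^{-1}(B_E(\varphi,\delta))$, and the upper bound via the $x$-independent inclusion $(G^x)^{-1}(B_E^c(\Phi^x(s),\delta))\subseteq B_F^c(\Phi_J(s),\delta/L)$, with uniformity in $x$ coming exactly as you say from the fact that the base LDP for $\nu_\e$ is invoked once with parameters depending only on $(s,\delta,\gamma,L)$. Your added verification that each $I^x$ is a good rate function is a sound (and slightly more complete) touch that the paper leaves implicit.
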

\begin{proof}
	{\em Lower Bound.} Fix $s \geq 0$, $\delta > 0$ and $ \gamma > 0$. For each $x \in D$, let $\varphi^x \in E$ be such that $I^x(\varphi^x) \leq s$. Therefore, for each $x \in D$ there exists $\psi^x \in F $ such that $\varphi^x = G^x(\psi^x)$ and $J(\psi^x) \leq I^x(\varphi^x) + \gamma/2$. Since $J(\psi^x) \leq s + \gamma/2$, for each $x \in D$, we have
	\begin{align*}
		\mu_\e^x(B_E(\varphi^x,\delta)) & = \nu_\e( \{f \in F:\norm{G^x(f) -\varphi^x}_E < \delta\}) \geq \nu_\e\Big( \Big\{f \in F: \norm{f -\psi^x}_F < \frac{\delta}{L} \Big\}\Big)
		\\[10pt] & \geq \exp \Big(-\frac{ J(\psi^x) + \gamma/2}{\e} \Big) \geq \exp \Big(-\frac{ I^x(\varphi^x) + \gamma}{\e} \Big),
	\end{align*}	
	for any $\e \leq \e_0$ with $\e_0 > 0$ only depending on $s$, $\gamma$, $\delta$ and $L$. 
		
	 {\em Upper Bound.} Fix $s_0 \geq 0$, $\delta > 0$ and $\gamma > 0$ and observe that
	\begin{align*}
		\mu_\e^x(B^c_E(\Phi^x(s),\delta)) = \nu_\e \Big(\Big\{f \in F: \inf_{\varphi \in E\,:\,I^x(\varphi)\leq s} \norm{G^x(f) - \varphi}_E \geq \delta \Big\} \Big).
	\end{align*}
	Note that for a given $f \in F$, if there exists $\psi \in F$  such that $J(\psi) \leq s$ and $\norm{f-\psi}_E < \frac{\delta}{L}$, then $I^x(G^x(\psi)) \leq J(\psi) \leq s$ and $\norm{G^x(f) - G^x(\psi)} < \delta$ for any $x \in D$. Hence, there exists some $\e_0>0$ such that
	\begin{align*}
		\mu_\e^x(B^c_E(\Phi^x(s),\delta)) \leq \nu_\e \Big(\Big\{ f \in F: \inf_{\psi \in F\,:\,J(\psi)\leq s} \norm{f - \psi}_F \geq \frac{\delta}{L} \Big\} \Big) \leq \exp \Big( - \frac{s - \gamma}{\e}\Big),
	\end{align*}
	for any $\e \leq \e_0$.
\end{proof}
\color{black}
To define the Navier-Stokes rate function, we first define the Hamiltonian
\begin{equation*}
	\mathcal{H}(u):= u' + Au + B(u),\ \ \ \ \ u \in D(\mathcal{H}):=W^{1,2}(0,T;V^{-1}) \cap L^2(0,T;V)).
\end{equation*}
For $u \in\,D(\mathcal{H})$ in this space, the nonlinearity $B(u)$ is a well-defined element of $L^2(0,T;V^{-1})$. For any $x \in H$ and $u \in\,C([0,T];H)$, we define 
\begin{equation*}
		I^x(u) = 
		\begin{cases}
			\ds \frac{1}{2} \int_0^T\norm{\mathcal{H}(u)(t)}_H^2dt & \text{ if }  \mathcal{H}(u) \in L^2(0,T;H),\ \text{and}\ u(0) = x,
			\\[10pt] +\infty & \text{ otherwise}.
		\end{cases}		
	\end{equation*}
\begin{theorem}\label{th_FWLDP}
Assume that 
	\begin{equation*}
		\lim_{\e \to 0} \delta(\e) =0,\ \ \ \ \lim_{\e \to 0} \e \log \frac{1}{\delta(\e)} = 0.
	\end{equation*} 
	If $u^x_\e$ is the solution to equation \eqref{eq_NS_good}, then for any $R > 0$, the family $\{\mathcal{L}(u_\e^x) \}_{\e > 0}$  satisfies a Freidlin-Wentzell uniform large deviations principle in $C([0,T];H)$ with rate functions $I^x$, uniformly with respect to  $x \in\,B_H(R)$.
\end{theorem}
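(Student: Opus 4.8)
The plan is to obtain the result as a consequence of the uniform large deviations principle for the Ornstein--Uhlenbeck process $z_\e$ proved in Theorem \ref{th_OULDP} (whose assumptions are exactly those made here) together with the uniform contraction principle, Theorem \ref{th_contraction}. The key observation is the decomposition $u^x_\e = z_\e + \F_x(z_\e) =: G^x(z_\e)$, where $G^x := I + \F_x$ is viewed as a map on the closed subspace $F \subset C([0,T];L^4)$ of paths taking values in $H$; this subspace supports $\mathcal{L}(z_\e)$ and contains every skeleton solution $z_\varphi$, and on it the identity maps boundedly into $C([0,T];H)$, since $\|z(t)\|_H = \|z(t)\|_{L^2(\T2)} \le c\,\|z(t)\|_{L^4(\T2)}$ on the torus. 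By Theorem \ref{th_OULDP}, $\{\mathcal{L}(z_\e)\}_\e$ satisfies an LDP in $F$ with good rate function $J_T$. Were $G^x$ globally Lipschitz from $F$ into $C([0,T];H)$, uniformly over $x \in B_H(R)$, the claim would follow directly from Theorem \ref{th_contraction}, with rate function $\inf\{J_T(z): u = G^x(z)\}$, and one checks that this infimum equals $I^x(u)$ and is attained: if $u = z + \F_x(z)$ with $J_T(z) < \infty$, then, writing $v := \F_x(z)$, one has $\mathcal{H}(u) = (z' + Az) + \big(v' + Av + B(z+v)\big) = z' + Az \in L^2(0,T;H)$ and $u(0) = x$, whence $I^x(u) = J_T(z)$; conversely, given $u$ with $\mathcal{H}(u) \in L^2(0,T;H)$ and $u(0) = x$, the solution $z$ of the skeleton equation $\eqref{eq_NS_forced_determ}$ with forcing $\mathcal{H}(u)$ lies in $W^{1,2}(0,T;H) \cap L^2(0,T;D(A))$ by maximal regularity and satisfies $u = z + \F_x(z) = G^x(z)$ with $J_T(z) = I^x(u)$.

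The obstruction is that Lemma \ref{lem_NS_cont} provides only \emph{local} Lipschitz continuity of $\F_x$, uniform over $x \in B_H(R)$. I would remove it by truncation. For $N \in \N$ let $\rho_N$ be the (globally Lipschitz) radial retraction of $C([0,T];L^4)$ onto $\overline{B_{C([0,T];L^4)}(N)}$, and set $G^x_N := I + \F_x \circ \rho_N$. Combining Lemma \ref{lem_NS_cont} with the continuous embedding $C([0,T];L^4) \hookrightarrow L^4(0,T;L^4)$ (of norm $T^{1/4}$), the map $G^x_N : F \to C([0,T];H)$ is Lipschitz with a constant depending only on $R$, $N$ and $T$, uniformly over $x \in B_H(R)$. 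Theorem \ref{th_contraction} then yields a Freidlin--Wentzell uniform LDP in $C([0,T];H)$ for $\{\mathcal{L}(G^x_N(z_\e))\}_\e$, uniformly over $x \in B_H(R)$, with rate functions $I^x_N(u) := \inf\{J_T(z): u = G^x_N(z)\}$.

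It remains to pass to the limit $N \to \infty$. Since $G^x_N(z) = G^x(z)$ whenever $\|z\|_{C([0,T];L^4)} \le N$, the process $G^x_N(z_\e)$ coincides with $u^x_\e$ off the event $\{\|z_\e\|_{C([0,T];L^4)} > N\}$, whose probability, by the upper bound in Theorem \ref{th_OULDP}, satisfies $\limsup_{\e \to 0} \e \log \P(\|z_\e\|_{C([0,T];L^4)} > N) \le -a_N$ with $a_N := \inf\{J_T(z): \|z\|_{C([0,T];L^4)} \ge N\} \to \infty$ as $N \to \infty$, because the sublevel sets of the good rate function $J_T$ are bounded in $C([0,T];L^4)$; crucially, $a_N$ does not depend on $x$. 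For the same reason, given $s \ge 0$, one has $I^x_N \equiv I^x$ on $\{I^x \le s\}$ and $\{I^x_N \le s\} = \{I^x \le s\}$ as soon as $N$ is large enough (depending only on $s$), uniformly in $x$; in particular $I^x$ inherits goodness from $I^x_N$. Feeding these facts into the two clauses of the Freidlin--Wentzell uniform LDP — splitting $\mathcal{L}(u^x_\e)(\cdot)$ according to whether $\|z_\e\|_{C([0,T];L^4)}$ exceeds $N$, choosing $N = N(s,\gamma)$ so large that $a_N$ dominates the exponential rates in play, and invoking the LDP already obtained for $G^x_N(z_\e)$ — yields both the lower and the upper bounds for $\{\mathcal{L}(u^x_\e)\}_\e$ with rate function $I^x$, uniformly over $x \in B_H(R)$. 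The main difficulty, and the step requiring care, is exactly this reconciliation of the merely local bound of Lemma \ref{lem_NS_cont} with the global hypothesis of the contraction principle: the truncation and exponential-equivalence argument must be carried out so that every constant and every exceptional probability is controlled uniformly in the initial datum $x \in B_H(R)$, which is what makes the resulting large deviations principle \emph{uniform}.
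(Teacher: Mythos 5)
Your proposal follows essentially the same route as the paper: decompose $u^x_\e = (I+\F_x)(z_\e)$, invoke the LDP for $z_\e$ from Theorem \ref{th_OULDP} together with the uniform contraction principle of Theorem \ref{th_contraction}, and identify the resulting rate function with $I^x$ through uniqueness of the decomposition $u = z_\varphi + \F_x(z_\varphi)$ with $\varphi = \mathcal{H}(u) = z' + Az$. The only point of difference is that you make explicit, via the radial truncation $G^x_N = I + \F_x\circ\rho_N$ and the exponential negligibility of $\{\Vert z_\e\Vert_{C([0,T];L^4)} > N\}$ (with rate $a_N \to \infty$ independent of $x$), the passage from the local Lipschitz bound of Lemma \ref{lem_NS_cont} to the global Lipschitz hypothesis of Theorem \ref{th_contraction} — a step the paper applies without comment — and your handling of it is correct and uniform over $x \in B_H(R)$.
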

\begin{proof}
	First of all, notice that $u^x_\e=(I+\F_x)(z_\e)$. Lemma $\ref{lem_NS_cont}$ implies that the mapping 
	\[I+\F_x:C([0,T];L^4) \to C([0,T];H),\] is locally Lipschitz, uniformly on bounded sets. Therefore, thanks to the contraction principle, Theorem $\ref{th_contraction}$, and Theorem \ref{th_OULDP}, the family $\{(I+\F_x)(z_\e)\} = \{u_\e^x\}$ satisfies a Freidlin-Wentzell uniform large deviations principle with rate function
	\begin{align*}
		I_T^x(u) &	= \inf\Big\{J_T(z):u = z + \F_x(z), \ z \in W^{1,2}(0,T;H) \cap L^2(0,T;D(A)) \Big\}.
	\end{align*}
	If $ u \in D(\mathcal{H})$ and $u(0) = x$, then $\mathcal{H}(u) \in L^2(0,T;V^{-1})$ and $u$ is a weak solution  to
	\begin{equation}\label{eq_NS_forced_det}
	\begin{cases}
		du(t) + [Au(t) +B(u(t))]dt = \mathcal{H}(u)(t)dt,
		\\[10pt] u(0) = x.			
	\end{cases}
\end{equation}	 
Note that $u \in D(\mathcal{H})$ implies that equation $\eqref{eq_NS_forced_determ}$ with forcing $\varphi = \mathcal{H}(u)$ has a unique weak solution $z_\varphi \in X$. In particular this also implies that $\mathcal{F}_x(z_\varphi) \in D(\mathcal{H})$ and $u = z_\varphi + \mathcal{F}_x(z_\varphi)$. This decomposition is unique. Indeed, if $u = z + \mathcal{F}_x(z)$ for some other  $z \in D(\mathcal{H})$, then $u-\mathcal{F}_x(z)$ would again be a weak solution to equation $\eqref{eq_NS_forced_determ}$ with forcing $\varphi = \mathcal{H}(u)$ so that $z = z_\varphi$. This implies that 
\begin{equation*}
	J_T(z_\varphi) = \frac{1}{2} \int_0^T \norm{\mathcal{H}(u)(t)}_H^2dt, 
\end{equation*}
whenever $\mathcal{H}(u) \in L^2(0,T;H)$.
\end{proof}
\begin{remark}
{\em 	Notice that both the proof of Theorem \ref{th_OULDP} and the proof of Theorem \ref{th_FWLDP} do not require periodic boundary conditions.}
\end{remark}

The second definition for the uniform large deviations principle is given by the following. It can be found in \cite{dz3}.
\begin{definition}
	Let $E$ be a Banach space and let $D$ be some non-empty set. Suppose that for each $x \in D$,  $\{\mu_{\e}^x\}_{\e > 0}$ is a family of probability measures on $E$ and $I^x:E \to [0,+\infty]$ is a good rate function. The family $\{\mu_\e^x\}_{\e > 0}$ is said to satisfy a Dembo-Zeitouni uniform large deviations principle in $E$ with rate functions $I^x$ uniformly with respect to  $x \in\,D$ if the following hold.
	\begin{enumerate}[(i)]
		\item For any $\gamma >0$ and open set $G \subset E$, there exists $\e_0 > 0$ such that
		\begin{equation*}
			\inf_{x \in\,D}\,\mu_\e^x(G) \geq \exp \left( -\frac{1}\e\left[\, \sup_{y \in D}\, \inf_{u \in G} I^y(u) + \gamma\right]\right),\ \ \ \ \e\leq \e_0.
		\end{equation*}
		
		\item For any $\gamma > 0$ and closet set $F \subset E$, there exists $\e_0 > 0$ such that
		\begin{equation*}
			\sup_{x \in\,D}\,\mu_\e^x(F) \leq \exp \left( - \frac{1}{\e}\left[\inf_{y \in D}\, \inf_{u \in G} I^y(u)-\gamma\right]\right),\ \ \ \ \e\leq \e_0.
		\end{equation*}
	\end{enumerate}
\end{definition}
\begin{corollary}\label{cor_DZLDP}
Assume that 
	\begin{equation*}
		\lim_{\e \to 0} \delta(\e) = 0,\ \ \ \ \lim_{\e \to 0} \e \log \frac{1}{\delta(\e)} = 0.
	\end{equation*} 
	Let $K \subset H$ be a compact set. Then the family $\{\mathcal{L}(u_\e^x) \}_{\e > 0}$ of solutions to equation $\eqref{eq_NS_good}$ satisfies a Dembo-Zeitouni uniform large deviations principle in $C([0,T];H)$ with rate functions $\{I^x\}_{x \in K}$ uniformly with respect to  $x \in\,K$.
\end{corollary}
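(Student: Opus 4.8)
The plan is to \emph{derive} the Dembo--Zeitouni uniform large deviations principle over the compact set $K$ from the Freidlin--Wentzell one already obtained in Theorem \ref{th_FWLDP}. Since $K$ is compact in $H$ it is bounded, so $K \subset B_H(R)$ with $R:=1+\sup_{x \in K}\norm{x}_H$, and Theorem \ref{th_FWLDP} therefore yields the Freidlin--Wentzell uniform LDP in $C([0,T];H)$ with the rate functions $I^x$, uniformly over $x \in B_H(R)$ and hence, a fortiori, uniformly over $x \in K$. The two formulations of the uniform LDP are not equivalent in general (this is precisely one of the points of \cite{s1}), but the Freidlin--Wentzell formulation does imply the Dembo--Zeitouni one once the level sets are \emph{uniformly good} over $K$, that is, once one knows that for every $s \geq 0$ the set
\[
	\Phi_K(s):=\bigcup_{x \in K}\Phi^x(s),\qquad \Phi^x(s):=\{u \in C([0,T];H):I^x(u)\leq s\},
\]
is relatively compact in $C([0,T];H)$. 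Granting this, the assertion follows from the comparison results of \cite{s1}. Thus the only thing that genuinely has to be checked is the uniform-in-$x$ compactness of the level sets; the rest is abstract and can be quoted.

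To prove that $\Phi_K(s)$ is relatively compact, recall from the proof of Theorem \ref{th_FWLDP} that $u \in \Phi^x(s)$ if and only if $u = z_\varphi + \F_x(z_\varphi)$ with $\varphi = \mathcal{H}(u)$ and $\tfrac12\int_0^T\norm{\varphi(t)}_H^2\,dt \leq s$; equivalently,
\[
	\Phi^x(s)=(I+\F_x)(\mathcal{Z}_s),\qquad \mathcal{Z}_s:=\Big\{z_\varphi:\varphi \in L^2(0,T;H),\ \tfrac12\int_0^T\norm{\varphi(t)}_H^2\,dt \leq s\Big\}.
\]
By Step 1 of the proof of Theorem \ref{th_OULDP}, $\mathcal{Z}_s$ is a \emph{fixed} compact subset of $C([0,T];L^4)$ (indeed it is bounded in $C^\delta([0,T];V^\rho)$ for suitable $\delta,\rho$, hence relatively compact there, and also relatively compact in $C([0,T];H)$). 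Consequently $\Phi_K(s)$ is the image of the compact set $K\times\mathcal{Z}_s$ under the map $\Psi(x,z):=z+\F_x(z)$, so it suffices to prove that $\Psi$ is continuous from $K\times\mathcal{Z}_s$ into $C([0,T];H)$. Continuity in the $z$ variable, uniformly with respect to $x \in K$, is exactly Lemma \ref{lem_NS_cont}. Continuity in the $x$ variable follows from an energy estimate of the same flavour: for $x,x' \in K$ and $z \in \mathcal{Z}_s$ the difference $v:=\F_x(z)-\F_{x'}(z)$ solves
\[
	\partial_t v + Av + B(\F_x(z)+z) - B(\F_{x'}(z)+z)=0,\qquad v(0)=x-x',
\]
and the computation already carried out for Lemma \ref{lem_NS_cont}, using \eqref{eq_sob1} and Gronwall's lemma, gives
\[
	\norm{v(t)}_H^2 + \int_0^t\norm{v(s)}_V^2\,ds \leq \norm{x-x'}_H^2\exp\Big(c\int_0^T\big[\norm{\F_{x'}(z)(s)}_V^2+\norm{z(s)}_{L^4}^4\big]ds\Big),
\]
where the exponent is bounded uniformly over $x' \in K$ and $z \in \mathcal{Z}_s$ by \eqref{sn20} and the boundedness of $\mathcal{Z}_s$. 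Hence $\Psi$ is jointly continuous, $\Phi_K(s)$ is relatively compact in $C([0,T];H)$, and in particular $(x,u)\mapsto I^x(u)$ is jointly lower semicontinuous on $K \times C([0,T];H)$.

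With this in hand, the Dembo--Zeitouni uniform LDP over $K$ follows from Theorem \ref{th_FWLDP} via the comparison results of \cite{s1}. If one prefers to work directly from the Freidlin--Wentzell bounds, the upper bound is the elementary direction: given a closed set $F$, the joint lower semicontinuity of $(x,u)\mapsto I^x(u)$ together with the compactness of $\Phi_K(s)$ produces, for any $s$ strictly below $\inf_{y\in K}\inf_{u\in F}I^y(u)$, a single $\delta>0$ with $F \subset B^c_{C([0,T];H)}(\Phi^x(s),\delta)$ simultaneously for all $x \in K$, so that Theorem \ref{th_FWLDP}(ii) applies; the lower bound is the subtler direction, and it is exactly there — rather than in any PDE estimate — that the argument of \cite{s1}, again powered by the uniform compactness of the level sets, is needed to pass from a family of base points $\varphi^x$ to a radius that is uniform in $x$. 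In short, the main obstacle is organizational rather than analytic: all the PDE work is already contained in Theorems \ref{th_OULDP}--\ref{th_FWLDP} and Lemma \ref{lem_NS_cont}, and what remains is to establish the uniform compactness of the level sets and feed it into the comparison theorem of \cite{s1}.
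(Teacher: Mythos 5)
Your reduction of everything to an abstract comparison statement of the form ``Freidlin--Wentzell uniform LDP over $K$ $+$ relative compactness of $\bigcup_{x \in K}\Phi^x(s)$ $\Rightarrow$ Dembo--Zeitouni uniform LDP over $K$'' is where the argument breaks: no such theorem is in \cite{s1}, and the implication is false as stated. What Theorem 2.7 of \cite{s1} requires (and what the paper verifies) is that for each $s\geq 0$ the map $x \mapsto \Phi^x(s)$ be continuous in the \emph{Hausdorff metric}, which is strictly stronger than compactness of the union of the level sets. The obstruction is exactly the lower bound you defer to \cite{s1}: given an open $G$, one must choose near-minimizers $\varphi^x \in G$ with $I^x(\varphi^x)\leq \sup_{y\in K}\inf_G I^y+\gamma$ \emph{and} a single radius $\delta>0$ with $B(\varphi^x,\delta)\subset G$ for all $x\in K$, since the threshold $\e_0$ in the Freidlin--Wentzell lower bound depends on $\delta$. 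Uniform compactness of the level sets (even together with joint lower semicontinuity of $(x,u)\mapsto I^x(u)$, which anyway points in the wrong direction) does not prevent the near-minimizers from being pushed to $\partial G$ as $x$ varies. A toy counterexample: on $\R$, take $K=\{0\}\cup\{1/n\}$, $\mu^{1/n}_\e$ proportional to $\delta_{-1}+e^{-1/\e}\delta_{m_\e(n)}+e^{-2/\e}\delta_5$ with $m_\e(n)=-1/n$ for $\e>a_n$ and $m_\e(n)=1/n$ for $\e\leq a_n$ (with $a_n\downarrow 0$), and $\mu^0_\e$ proportional to $\delta_{-1}+\tfrac12 e^{-1/\e}(\delta_0+\delta_1)+e^{-2/\e}\delta_5$. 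One checks that the Freidlin--Wentzell uniform LDP holds over $K$, the union of the level sets is compact and the rate is jointly lower semicontinuous, yet for $G=(0,\infty)$ the Dembo--Zeitouni lower bound fails along $\e>a_n$, precisely because $x\mapsto\Phi^x(s)$ is not Hausdorff-continuous at $x=0$. So the ``organizational'' step you wave at is the mathematical heart of the corollary, and it is not covered by the ingredients you establish.

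The good news is that your own auxiliary estimate is essentially the missing piece, just aimed at the wrong target. Since $\Phi^x(s)=\{z+\F_x(z):z\in\mathcal{Z}_s\}$, matching the elements of $\Phi^x(s)$ and $\Phi^y(s)$ that come from the \emph{same} control gives $\mathrm{dist}_{\mathrm{Hausdorff}}(\Phi^x(s),\Phi^y(s))\leq \sup_{z\in\mathcal{Z}_s}\norm{\F_x(z)-\F_y(z)}_{C([0,T];H)}$, and your energy bound $\norm{\F_x(z)-\F_y(z)}^2_{C([0,T];H)}\leq \norm{x-y}_H^2\exp\bigl(c\int_0^T[\norm{\F_y(z)(t)}_V^2+\norm{z(t)}_{L^4}^4]dt\bigr)$, with the exponent bounded uniformly over $y\in K$ and $z\in\mathcal{Z}_s$, yields exactly the Hausdorff continuity needed to invoke Theorem 2.7 of \cite{s1}. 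This is, up to presentation, the paper's proof: there the same Lipschitz-in-initial-condition estimate is phrased for the controlled Navier--Stokes equation \eqref{eq_NS_forced} with a fixed forcing $\varphi=\mathcal{H}(u)$, giving $\sup_{u\in\Phi^x(s)}\mathrm{dist}_{C([0,T];H)}(u,\Phi^y(s))\leq c_s(\norm{y}_H)\norm{x-y}_H$. To repair your write-up, replace the claim about uniformly good level sets by this Hausdorff-continuity statement (proved via your estimate) and then quote Theorem 2.7 of \cite{s1}; the compactness of $\mathcal{Z}_s$ and of $K$ is then not needed as a separate hypothesis.
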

\begin{proof}
	In view of Theorem 2.7 of \cite{s1}, to prove equivalence of the two uniform large deviation principles over a compact subset of $H$, it suffices to show that for every fixed $s\geq 0$ the mapping
	\[x \in\,H\mapsto  \Phi^x(s):= \{u \in C([0,T];H):I^x(u) \leq s \},\] is continuous with respect to the Hausdorff metric. That is, we must show that for any  $\{x_n\}_{n =1 }^\infty \subset H$ such that $x_n \to x \in H$, 
	\begin{equation*}
		\lim_{n \to \infty}  \max \left( \sup_{u \in \Phi^{x_n}(s)} \mathrm{dist}_{C([0,T];H)}(u, \Phi^x(s)), \sup_{u \in \Phi^{x}(s)} \mathrm{dist}_{C([0,T];H)}(u, \Phi^{x_n}(s))\right)   = 0.
	\end{equation*}	 
	This is immediately implied by the continuity of the Navier-Stokes equations with respect to initial conditions. Indeed, suppose that $u^x_\varphi$ is a  solution to the equation, 
	\begin{equation}\label{eq_NS_forced}
		\begin{cases}
			du(t) + (Au(t)+B(u(t)))dt = \varphi(t)dt,
			\\[10pt] u(0) = x,
		\end{cases}
	\end{equation} 
	with  driving force $\varphi \in L^2(0,T;H)$. Then by standard energy estimates (see for instance, \cite{ks}), we have
	\begin{align*}
		\sup_{0 \leq t \leq T}\norm{u_\varphi^x(t) - u_\varphi^y(t)}_H^2 & + \int_0^T \norm{u_\varphi^x(t) - u_\varphi^y(t)}_V^2 dt  
		\\[10pt] & \leq  \norm{x-y}_H^2\exp\Big(c \int_0^T\norm{u^y_\varphi(t)}_V^2 dt \Big)
		\\[10pt]  & \leq \norm{x-y}_H^2 \exp \Big( c \big[\norm{y}_H^2 + \norm{\varphi}_{L^2(0,T;H)}^2 \big] \Big).
	\end{align*}
	Now, if $u \in \varphi^x(s)$, then $\varphi_u:=\mathcal{H}(u) \in L^2(0,T;H)$, $\frac{1}{2}\norm{\varphi_u}_{L^2(0,T;H)}^2 \leq s$ and $u$ solves equation $\eqref{eq_NS_forced_det}$. But then, the weak solution $v \in W^{1,2}(0,T;V^{-1}) \cap L^2(0,T;V)$ to
	\begin{equation*}
		\begin{cases}
			dv(t) + [Av(t) + B(v(t))]dt = \varphi_u(t)dt,
			\\[10pt]  v(0) = y,
		\end{cases}
	\end{equation*}
	belongs to $\Phi^y(s)$. Therefore, 
	\begin{equation*}
		 \mathrm{dist}_{C([0,T];H)}(u,\Phi^y(s) )\leq \norm{u-v}_{C([0,T];H)} \leq c_{s}(\norm{y}_H)\,\norm{x-y}_H,
	\end{equation*}
	for some continuous increasing function $c_s:[0,+\infty)\to [0,+\infty)$.
	Since this is true for arbitrary $u \in \Phi^x(s)$, it follows that
	\begin{equation*}
		\sup_{u \in \Phi^x(s)} \mathrm{dist}_{C([0,T];H)}(u,\Phi^y(s) ) \leq c_{s}(\norm{y}_H)\,\norm{x-y}_H,
	\end{equation*}
	which implies the result, since $\sup_{n \in\,\mathbb{N}}\norm{x_n}_H<\infty$.
\end{proof}

\section{Proof of Theorem \ref{th_main}}

We start this section with the description of the quasi-potential associated with  equation $\eqref{eq_NS_good}$. To simplify notation, for any $ T > 0$ we will denote
\begin{equation*}
	I_T(u):= \frac{1}{2} \int_0^T \norm{\mathcal{H}(u)(t)}_H^2 dt,
\end{equation*}
whenever $\mathcal{H}(u) \in L^2(0,T;H)$. In addition, we set 
\[I_T^y(u):=\begin{cases}
	I_T(u), & \text{if}\ u(0) = y,\\
	+\infty,  &  \text{otherwise.}
\end{cases}\]
The quasi-potential, $U:H \to [0,+\infty]$ is  defined as
\begin{equation}\label{eq_quasi}
	U(x) := \inf \{ I_{T}(u): T > 0, u \in C([0,T];H), u(0) = 0, u(T) = x\}.
\end{equation}
For any $x \in H$, $U(x)$ gives the minimum action of all paths that start at $0$ and end at $x$. Since $0$ is an asymptotically attracting equilibria for the Navier-Stokes equations, $U(x)$ will govern the long-time dynamics and asymptotic behavior of the invariant measures.

In the particular case of the Navier-Stokes equations on the torus, the orthogonality of $B(u)$ and $Au$ can be taken advantage of to provide an explicit formula for the quasipotential. In fact, as proven in  \cite[Theorem 7.1]{bcf} we have that	for any $x \in H$ 
	\begin{equation}
	\label{sn1}	
	U(x) = \begin{cases}
		\norm{x}_V^2, & x \in V,
		\\[10pt] + \infty, & x \in H \setminus V.
		\end{cases}
	\end{equation}

\medskip 

Now, we proceed with the proof of 
Theorem $\ref{th_main}$.  Some of the steps of the proof are analogous to those used in   \cite[Theorem 4.5]{bc}, where a large deviation principle for the invariant measures of the $2D$ stochastic Navier-stokes equation is studied, under the assumption that the covariance of the noise does not depend on $\e$.
In those steps our arguments will be less detailed and we refer the reader to \cite{bc}. On the other hand, our arguments will be fully detailed in those steps of the proof that deviate from \cite{bc}, and require new arguments and techniques.

\subsection{Lower bound}
\begin{proposition}
	Under the assumptions of Theorem $\ref{th_main}$, the family of invariant measures $\{\nu_\e\}_{\e > 0}$ of equation $\eqref{eq_NS_good}$ satisfies the large deviations principle lower bound in $H$ with rate function $U(x)$. That is, for any $x \in H$, $\delta > 0$ and $\gamma > 0$, there exists $\e_0>0$ such that
	\begin{equation*}
		\nu_\e(B_H(x,\delta)) \geq \exp \Big(-\frac{U(x) + \gamma}{\e} \Big),\ \ \ \ \ \e \leq \e_0.
	\end{equation*}
	\end{proposition}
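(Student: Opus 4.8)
The plan is to derive the lower bound from the uniform Freidlin--Wentzell large deviations principle for the paths (Theorem~\ref{th_FWLDP}), using the invariance of $\nu_\e$ together with the global exponential attractivity of the origin for the unforced Navier--Stokes flow and the continuity of the controlled flow with respect to the initial datum. If $U(x)=+\infty$ there is nothing to prove, so assume $x\in V$, $U(x)=\norm{x}_V^2<\infty$, and fix $\delta,\gamma>0$. By the definition \eqref{eq_quasi} of the quasi-potential there exist $T_2>0$ and $\psi\in D(\mathcal H)$ on $[0,T_2]$ with $\psi(0)=0$, $\psi(T_2)=x$ and $I_{T_2}(\psi)\le U(x)+\gamma/4=:s$. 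From the energy identity \eqref{ns1} and the invariance of $\nu_\e$ one has $\sup_{\e\in(0,1)}\int_H\norm{y}_H^2\,d\nu_\e(y)<\infty$, so by Chebyshev's inequality there is $R>0$ with $\nu_\e(B_H(R))\ge 1/2$ for all $\e$ small; writing, again by invariance, $\nu_\e(B_H(x,\delta))=\int_H\P(u_\e^y(T)\in B_H(x,\delta))\,d\nu_\e(y)\ge\int_{B_H(R)}\P(u_\e^y(T)\in B_H(x,\delta))\,d\nu_\e(y)$ for a fixed time $T$ to be chosen, it remains to bound the last integrand uniformly over $y\in B_H(R)$.

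The next step is to build, for every $y\in B_H(R)$, a connecting path of controlled action \emph{with a common terminal time and a common action bound}. Since $\langle B(u),u\rangle_H=0$, the unforced solution satisfies $\tfrac{d}{dt}\norm{u^y(t)}_H^2=-2\norm{u^y(t)}_V^2\le-2\norm{u^y(t)}_H^2$, hence $\norm{u^y(t)}_H\le e^{-t}\norm{y}_H$; choose $\rho\in(0,1)$ small (to be fixed below) and $T_1=T_1(R,\rho)$ with $e^{-T_1}R<\rho$, so that $u^y(T_1)\in B_H(\rho)$ for every $y\in B_H(R)$, and set $T:=T_1+T_2$. Define $\varphi^y$ on $[0,T]$ to follow the unforced flow from $y$ on $[0,T_1]$ (so that $\mathcal H(\varphi^y)=0$ there) and on $[T_1,T]$ to be the solution $\tilde\psi^y$ of the Navier--Stokes equation forced by $\mathcal H(\psi)(\cdot-T_1)$ and started at $u^y(T_1)$. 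Then $\varphi^y\in D(\mathcal H)$, $\varphi^y(0)=y$, and $I_T^y(\varphi^y)=I_{T_2}(\psi)\le s$, so $\varphi^y\in\Phi^y(s)$ for every $y\in B_H(R)$. Moreover, by the same energy estimate used in the proof of Corollary~\ref{cor_DZLDP}, applied with forcing $\mathcal H(\psi)$ and initial data $0$ and $u^y(T_1)$, one gets $\norm{\tilde\psi^y(T_2)-x}_H=\norm{\tilde\psi^y(T_2)-\psi(T_2)}_H\le c_1\norm{u^y(T_1)}_H\le c_1\rho$, with $c_1$ depending only on $\norm{\mathcal H(\psi)}_{L^2(0,T_2;H)}$; now fix $\rho<\delta/(2c_1)$.

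To conclude, note that if $\norm{u_\e^y-\varphi^y}_{C([0,T];H)}<\delta/2$ then $\norm{u_\e^y(T)-x}_H\le\norm{u_\e^y(T)-\varphi^y(T)}_H+\norm{\tilde\psi^y(T_2)-x}_H<\delta$, whence $\P(u_\e^y(T)\in B_H(x,\delta))\ge\P(\norm{u_\e^y-\varphi^y}_{C([0,T];H)}<\delta/2)$. Applying the Freidlin--Wentzell uniform lower bound of Theorem~\ref{th_FWLDP} on $D=B_H(R)$, with this fixed $T$, with the level $s$, and with parameters $\delta/2$ and $\gamma/4$, yields $\e_0>0$ such that for $\e\le\e_0$ and all $y\in B_H(R)$, $\P(\norm{u_\e^y-\varphi^y}_{C([0,T];H)}<\delta/2)\ge\exp(-(I_T^y(\varphi^y)+\gamma/4)/\e)\ge\exp(-(U(x)+\gamma/2)/\e)$. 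Therefore $\nu_\e(B_H(x,\delta))\ge\tfrac12\exp(-(U(x)+\gamma/2)/\e)\ge\exp(-(U(x)+\gamma)/\e)$ after shrinking $\e_0$ so that $\tfrac12\ge e^{-\gamma/(2\e)}$, which is the claim.

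The main obstacle is precisely the passage from the fixed-initial-condition path LDP to an estimate that survives integration against $\nu_\e$ over the whole space $H$: one must (a) obtain an $\e$-uniform lower bound $\nu_\e(B_H(R))\ge 1/2$, and (b) exhibit a \emph{single family} of connecting paths $\{\varphi^y\}_{y\in B_H(R)}$ sharing one terminal time $T$ and one action bound $s$, which is exactly what makes Theorem~\ref{th_FWLDP} applicable. The two ingredients that make (b) work — the uniform exponential contraction $\norm{u^y(t)}_H\le e^{-t}\norm{y}_H$ of the unforced flow toward $0$, which crucially uses $\langle B(u),u\rangle_H=0$, and the continuity of the controlled flow in the initial datum already recorded in the proof of Corollary~\ref{cor_DZLDP} — are the technical heart of the argument; everything else is bookkeeping with the parameters $\delta$, $\gamma$, $R$ (equivalently $\rho$, $T_1$, $T_2$).
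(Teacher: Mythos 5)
Your proposal is correct and follows essentially the same route as the paper: invariance of $\nu_\e$, restriction to a ball of initial data carrying mass at least $1/2$, a uniform family of connecting paths built by letting the unforced flow decay toward $0$ and then appending a near-optimal path from the definition of the quasi-potential, continuity in the initial datum, and the Freidlin--Wentzell uniform LDP of Theorem \ref{th_FWLDP}. The only differences (Chebyshev plus the moment bound in place of the cited weak convergence $\nu_\e\rightharpoonup\delta_0$, and the explicit decay $\norm{u^y(t)}_H\le e^{-t}\norm{y}_H$ instead of ``choose $T_1$ large'') are cosmetic.
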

\begin{proof}
	Fix  $x \in H$, and any $\delta > 0$, $\gamma > 0$ and $T > 0$. We assume that $U(x) < \infty$ or else there is nothing to prove. Suppose that $\{v^y\}_{y \in H} \subset C([0,T];H)$ is a family of paths satisfying 
	\[ \sup_{y \in H} \norm{v^y(T) - x}_H < \delta/2.\]
	Thanks to the invariance of $\nu_\e$,  we have
	\begin{align*}
		\nu_\e(B_H(x,\delta)) & = \int_H \P( \norm{u^y_\e(T)-x}_H < \delta) \ d\nu_\e(y) 
		\\[10pt] & \geq \int_H \P(\norm{u_\e^y - v^y}_{C([0,T];H)} < \delta/2) \ d\nu_\e(y)
		\\[10pt] & \geq \int_{B_H(0,R)} \P(\norm{u_\e^y - v^y}_{C([0,T];H)} < \delta/2) \ d\nu_\e(y)
		\\[10pt] 	& \geq \nu_\e(B_H(0,R)) \inf_{y \in B_H(0,R)} \P(\norm{u_\e^y - v^y}_{C([0,T];H)} < \delta/2).
	\end{align*}
	Since the invariant measures are becoming concentrated around $0$, as $\e \downarrow 0$, we have
	\begin{equation*}
		\lim_{\e \to 0} \nu_\e(B_H(0,R)) = 1,
	\end{equation*}
	 for any $R > 0$. Thus, we can  pick $\e_1(R)>0$ small enough that \[\nu_\e(B_H(0,R)) \geq \frac{1}{2},\ \ \ \ \e \leq \e_1(R).\] Thanks to Theorem \ref{th_FWLDP}, a Freidlin-Wentzell uniform large deviations principle holds. Then,   for every $s_0 > 0$ there exists $\e_2(R)>0$  such that for any $v^y \in\,C([0,T];H)$ with $I^y_{T}(v^y) \leq s_0$,	\begin{equation*}
		\inf_{y \in B_H(0,R)} \P\left(\norm{u_\e^y - v^y}_{C([0,T];H)} < \delta/2\right) \geq \inf_{y \in B_H(0,R)} \,\exp \left( -\frac{1}\e \left[I^y_{T}(\varphi^y) + \gamma/2\right]\right),
	\end{equation*}
	for every $\e\leq \e_2(R)$.
	Therefore, to complete the proof, it remains to find a $T$ large enough that for each $y\in B_H(0,R)$, there exists a path $v^y \in C([0,T];H)$ with $v^y(0) = y$ that satisfies
	\begin{enumerate}[(a)]
		\item $I_T(v^y) \leq U(x) + \gamma/2$,
		\item $\norm{v^y(T) - x}_H < \delta/2$.
	\end{enumerate}
	The paths we choose are the solutions $u^y_\varphi$ to the controlled Navier Stokes equations, equation $\eqref{eq_NS_forced}$, with initial condition $y \in H$ and control $\varphi \in L^2(0,T;H)$, defined by
	\begin{align*}
		\varphi(t) = 
		\begin{cases}
			0 & \text{ if } 0 \leq t \leq T_1,
			\\[10pt]  \bar{\varphi}(t-T_1) & \text{ if } T_1 \leq t \leq T_1+ T_2,
		\end{cases}
	\end{align*}
	with $T_1$ and $T_2$ to be chosen.
	Here,  $\bar{\varphi} \in C([0,T_2];H)$ is a path such that $u^0_{\bar{\varphi}}(0) = 0$ and $u^0_{\bar{\varphi}}(T_2)=x$  with $I_{T_2}(u^0_{\bar{\varphi}})\leq  U(x) + \gamma/2$. Such a $T_2$ and $\bar{\varphi}$ exist by the definition of the quasipotential $U$. Meanwhile, $T_1 = T_1(\lambda)$ is taken large enough that the solutions $\{u_0^y\}_{ y \in B_H(0,R)}$ to the unforced Navier-Stokes equations satisfy
	\begin{equation*}
		\sup_{ y \in B_H(0,R)} \norm{u_0^y(T_1)}_H < \lambda,
	\end{equation*}
	for some small $\lambda$. Clearly point (a) is satisfied since the path contributes nothing to the action integral on the interval $[0,T_1]$. Point (b) follows by noting that the controlled Navier Stokes equations are continuous with respect to initial conditions. Indeed, since $u^y_0(T_1 ) \in B_H(0,\lambda)$, we have by a standard estimate (for example see Proposition 2.1.25 of \cite{ks}) that
	\begin{align*}
		\norm{u^y_\varphi(T_1+T_2) - x}_H & \leq  \sup_{z \in B_H(0,\lambda)} \norm{u^z_{\bar{\varphi}}(T_2) - u^0_{\bar{\varphi}}(T_2)}_H 
		\\[10pt]  & \leq \sup_{z \in B_H(0,\lambda)} \norm{z}_H \exp \Big({c\norm{z}_H^2 + c \norm{\bar{\varphi}}_{L^2(0,T_2;H)}^2}\Big).
	\end{align*}
	This implies point (b) if $\lambda$ is taken small enough. We conclude the proof upon taking $\e_0:= \min (\e_1,\e_2)$.
	
	\end{proof}

\subsection{Upper bound}
\begin{proposition}\label{prop_upper}
Under the assumptions of Theorem $\ref{th_main}$, the family of invariant measures $\{\nu_\e\}_{\e > 0}$ of equation $\eqref{eq_NS_good}$ satisfies the large deviations principle upper bound in $H$ with rate function $U(x)$. That is, for any $s \geq 0$, $\delta > 0$ and $\gamma > 0$, there exists $\e_0 > 0$ such that
	\begin{equation*}
		\nu_\e\left(\{h \in H:\mathrm{dist}_H(h,\Phi(s)) > \delta\}\right) \leq \exp \left( -\frac{s-\gamma}{\e} \right),\ \ \ \ \ \e \leq \e_0.
	\end{equation*}
	where 
	\[\Phi(s):= \{y \in H:U(y)\leq s \}.\]
\end{proposition}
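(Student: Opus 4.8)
The plan is to follow the classical Freidlin–Wentzell scheme for invariant measures, as in \cite{s2} and \cite{bc}, adapted to the vanishing correlation. The starting point is the representation of $\nu_\e$ via the invariance property: for any stopping time strategy based on the return times of $u^x_\e$ to a small ball around the origin, one can write $\nu_\e(\Gamma)$ as an average over excursions. Concretely, fix a large radius $R_0$ and a small radius $\rho$ and let $B_0 = B_H(0,\rho)$, $B_1 = B_H(0,R_0)$. Define the stopping times $\tau_0 = \inf\{t\ge 0: u_\e(t)\in B_0\}$, and then alternating exit/return times $\sigma_n$ (exit from $B_1$) and $\tau_n$ (subsequent return to $B_0$). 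Using the strong Markov property and the invariance of $\nu_\e$, one obtains a bound of the form
\begin{equation*}
\nu_\e\big(\{\mathrm{dist}_H(h,\Phi(s))>\delta\}\big) \le \frac{\sup_{y\in\partial B_0}\E_y\big(\text{number of excursions before hitting }\Gamma_\delta\big)\cdot \sup_{y\in\partial B_0}\P_y(u_\e \text{ hits }\Gamma_\delta \text{ in one excursion})}{\inf_{y\in B_0}\E_y(\tau_1)},
\end{equation*}
where $\Gamma_\delta := \{h: \mathrm{dist}_H(h,\Phi(s))>\delta\}$. The three quantities to control are: (1) a lower bound on the expected return time (a "recurrence" estimate, uniform in the starting point, giving that excursions are not too short); (2) an upper bound, exponential in $-1/\e$, on the probability that a single excursion reaches $\Gamma_\delta$; and (3) control on the number of excursions needed, which must not grow too fast in $1/\e$.

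The heart of the matter is step (2), and this is where the uniform LDP proved in Section 3 enters. An excursion reaching $\Gamma_\delta$ means there is a path of $u_\e$ starting near $0$, wandering through $B_1$, and at some point reaching a configuration at distance $>\delta$ from $\Phi(s)$. By the explicit formula \eqref{sn1}, $\Phi(s) = \{y\in V: \norm{y}_V^2 \le s\}$, which is a compact subset of $H$ (closed ball in $V$). Any continuous path from near $0$ to $\Gamma_\delta$ must have action at least $s - o(1)$ as the endpoints are tuned: indeed if $v$ is such a path with $v(0)$ near $0$ and $\mathrm{dist}_H(v(T),\Phi(s)) > \delta$, then $v(T)\notin\Phi(s)$ forces, by definition of the quasi-potential and \eqref{eq_quasi}, that $I_{T'}(v) \ge U(v(T)) - \epsilon_1 > s - \epsilon_1$ for the portion of the path up to time $T$ where it first exits $\Phi(s)$-neighborhood — more carefully, one shows $\inf\{I_T^y(v): \norm{y}_H \le \rho,\ v \in C([0,T];H),\ \mathrm{dist}_H(v(t),\Phi(s))>\delta \text{ for some } t\} \ge s - \gamma/2$ for $\rho$ small, using lower semicontinuity of $U$ and the triangle inequality $U(v(t)) \le I_t(v) + U(v(0)) + o(1)$ together with $U(v(0))\to 0$ as $v(0)\to 0$. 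Then the Dembo–Zeitouni uniform upper bound of Corollary \ref{cor_DZLDP} (applied over the compact set $\overline{B_0}$) gives $\sup_{y\in\overline{B_0}}\P_y(\text{excursion hits }\Gamma_\delta \text{ within time } T) \le \exp(-(s-\gamma)/\e)$ for $\e$ small. A tail argument — using the bound \eqref{ns1} on $\E\norm{u_\e(t)}_H^2$ (which holds since $\beta>2>1$) and a Markov/Chebyshev argument, or the exponential moment estimates available from the Ornstein–Uhlenbeck representation — is needed to rule out excursions lasting longer than $T$ before returning to $B_0$; the hypothesis $\e\,\delta(\e)^{-2/\beta}\to 0$ guarantees the trace term $\e\,\mathrm{Tr}\,Q_\e \le \e\,\delta(\e)^{-1/\beta}$ stays controlled and the dissipativity of $A$ forces returns.

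The remaining steps are more routine but require care with uniformity: for step (1), the recurrence/return-time lower bound follows from the fact that from any point in $B_1$ the solution reaches $B_0$ in expected time bounded below by a positive constant (and not instantly) — this uses the smoothing and the strict dissipativity, via estimates like \eqref{ns1}; for step (3), one bounds the number of excursions by observing that each excursion has probability bounded below (uniformly in $\e$ small) of returning to $B_0$ and staying in, say, $\Phi(s+1)$, so the number of excursions before hitting $\Gamma_\delta$ is stochastically dominated by a geometric random variable with $\e$-independent parameter, contributing only a multiplicative constant (or at worst a subexponential factor) that is absorbed into $\gamma$. Assembling these three pieces into the displayed bound and choosing $R_0$ large, $\rho$ small, $T$ large, and then $\e_0$ small completes the proof. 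The main obstacle is step (2): establishing the action lower bound $s-\gamma$ uniformly over excursion paths and transferring it to a probability bound via the uniform LDP, while simultaneously controlling the (non-compact-time) tail of long excursions — this is precisely where the decaying regularity of the noise interacts with the estimates and forces the stronger hypothesis $\beta>2$ and the scaling condition on $\delta(\e)$.
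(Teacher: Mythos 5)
Your excursion/renewal (Khasminskii--Sowers) scheme is a legitimately different organization from the paper, which instead uses the invariance identity at a single large time $t$ and splits the event according to (i) whether the initial datum lies outside a large ball $B_V(0,R_s)$, (ii) whether the trajectory stays outside $B_H(0,\lambda)$ at all integer times up to $N$, and (iii) whether it enters $B_H(0,\lambda)$ at some integer time, the last case being restarted by the Markov property. However, as written your plan has two concrete gaps. First, you invoke the Dembo--Zeitouni uniform LDP of Corollary \ref{cor_DZLDP} ``over the compact set $\overline{B_0}$'' with $B_0=B_H(0,\rho)$: closed balls of $H$ are \emph{not} compact in $H$, so Corollary \ref{cor_DZLDP} does not apply there. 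This is not a technicality: the whole reason the paper proves the exponential estimate of Lemma \ref{lem_exp_est} (via It\^o's formula for $\exp(\norm{u}_V^2/2\e)$, using $\langle B(u),Au\rangle_H=0$, $\beta>2$, and $\e\,\delta(\e)^{-2/\beta}\to 0$) is to show that, up to an error $e^{-s/\e}$, the invariant measure charges only the $V$-ball $B_V(0,R_s)$, which \emph{is} compact in $H$ and over which the Dembo--Zeitouni form is available; over merely $H$-bounded sets only the Freidlin--Wentzell form (Theorem \ref{th_FWLDP}) holds, and it bounds probabilities of being far from level sets of the path rate function, not of arbitrary closed sets. Your proposal has no analogue of Lemma \ref{lem_exp_est}; the allusions to \eqref{ns1} and to trace bounds give polynomial moments, not the $e^{-s/\e}$-type concentration in $V$ that the upper bound requires.

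Second, your justification of the single-excursion action bound rests on ``$U(v(0))\to 0$ as $v(0)\to 0$,'' which is false in the relevant topology: by \eqref{sn1}, $U(y)=\norm{y}_V^2$ (and $=+\infty$ for $y\in H\setminus V$), so points on an $H$-sphere of radius $\rho$, however small, can have arbitrarily large or infinite quasi-potential, and the triangle inequality $I_T(v)\ge U(v(T))-U(v(0))$ gives nothing. The correct substitute is dynamical, not a property of $U$ alone: one must show that a path starting in a small $H$-ball with action at most $s$ over a sufficiently long time window ends within $\delta$ of $\Phi(s)$ (the paper's Lemma \ref{lem_smallLambda}, which exploits the regularizing, dissipative unforced dynamics), together with a statement that trajectories which avoid the small $H$-ball for $N$ consecutive unit times must accumulate action at least $s$ (Lemma \ref{lem_Henergy}); the latter also replaces your geometric-number-of-excursions and return-time estimates, which you have not actually supplied and which are delicate precisely because the embedded chain lives on a non-compact $H$-sphere. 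So the skeleton of your approach could be made to work, but only after importing the $V$-norm exponential estimate and the two deterministic lemmas above --- i.e.\ the very ingredients that constitute the paper's proof.
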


 The proof requires the following three lemmas. 
\begin{lemma}[{\em Exponential Estimate}]\label{lem_exp_est}
	Assume that $Q$ has the form given in $\eqref{eq_Q_cov}$ for some $\beta > 2$. Moreover, suppose that 
	\begin{equation*}
		\lim_{\e \to 0} \delta(\e) = 0,\ \ \ \ \ \lim_{\e \to 0} \e \delta(\e)^{-2/\beta} = 0.
	\end{equation*}
	Then for any $s > 0$ there exist $\e_s >0$ and $ R_s > 0$ such that
	\begin{equation*}
		\nu_\e(B_V(0,R_s))\geq 1 - \exp\left(-\frac{s}{\e} \right),\ \ \ \ \ \e \leq \e_s.
	\end{equation*}
	\end{lemma}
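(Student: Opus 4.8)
The statement is an exponential tail bound on the invariant measure $\nu_\e$ of the ball $B_V(0,R_s)$ in the stronger norm $\|\cdot\|_V$. The natural route is to exploit the stationarity of $\nu_\e$ together with an exponential moment estimate (à la a Chernoff/exponential Markov argument). First I would establish an exponential estimate for the solution $u^x_\e$ of \eqref{eq_NS_good} of the form
\begin{equation*}
	\sup_{t \geq 0}\,\E\,\exp\!\left(\frac{\eta}{\e}\,\|u^x_\e(t)\|_V^2 - \frac{\eta}{\e}\int_0^t\!\cdots\right) \leq \exp\!\left(\frac{\eta}{\e}\|x\|_V^2\right)
\end{equation*}
for a suitable small constant $\eta > 0$, using It\^o's formula applied to $\|u^x_\e(t)\|_V^2 = \|A^{1/2}u^x_\e(t)\|_H^2$. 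The crucial structural input is the orthogonality relation \eqref{eq_prop_non_Au}, $\langle B(u),Au\rangle_H = 0$, which kills the nonlinear term in the $V$-energy balance and is precisely why the result is stated only for the torus. After applying $A^{1/2}$-energy It\^o's formula one gets
\begin{equation*}
	d\|u^x_\e\|_V^2 + 2\|u^x_\e\|_{V^2}^2\,dt = \e\,\mathrm{Tr}(A Q_\e)\,dt + 2\sqrt{\e}\,\langle A^{1/2}u^x_\e, A^{1/2}\sqrt{Q_\e}\,dw\rangle_H,
\end{equation*}
and here $\mathrm{Tr}(AQ_\e) = \sum_k |k|^2(1+\delta(\e)|k|^{2\beta})^{-1}$, which by a comparison to an integral (as in the proof of Lemma \ref{lem_convolution_to0}, but with one extra power of $|k|$) is bounded by $c\,\delta(\e)^{-2/\beta}$ when $\beta > 2$. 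This explains the hypothesis $\lim_{\e\to 0}\e\,\delta(\e)^{-2/\beta} = 0$: the deterministic drift from the trace term, multiplied by $\e$, stays bounded.

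The key steps, in order, would be: (1) Write the It\^o expansion for $\exp(\frac{\eta}{\e}\|u^x_\e(t)\|_V^2)$; using the Poincar\'e inequality $\|u\|_{V^2}^2 \geq \|u\|_V^2$ and the exponential supermartingale trick to absorb the quadratic-variation term $\frac{c\eta^2}{\e}\|u^x_\e\|_V^2\|u^x_\e\|_{V^2}^2$ against the dissipation $-\frac{2\eta}{\e}\|u^x_\e\|_{V^2}^2$ (valid for $\eta$ small, independent of $\e$), derive that
\begin{equation*}
	M_t := \exp\!\left(\frac{\eta}{\e}\|u^x_\e(t)\|_V^2 + \frac{\eta}{\e}\int_0^t\|u^x_\e(s)\|_V^2\,ds - \frac{\eta}{\e}\,\e\,\mathrm{Tr}(AQ_\e)\,t\right)
\end{equation*}
is a supermartingale. (2) Integrate over the stationary measure: by invariance, $\int_H \E\|u^x_\e(t)\|_V^2\,d\nu_\e(x)$ is constant in $t$, hence one gets a uniform-in-$t$ bound $\int_H \E\exp(\frac{\eta}{\e}\|u^x_\e(t)\|_V^2)\,d\nu_\e \leq$ (something like) $\exp(\frac{c\,\eta}{\e}\,\e\,\delta(\e)^{-2/\beta} + \frac{\eta}{\e}\int_H\|x\|_V^2 d\nu_\e)$ — here one needs the already-established tightness bound $\sup_\e \int_H \|x\|_V^2\,d\nu_\e(x) < \infty$ from the preliminaries, but one must be careful because it is $\int\|x\|_V^2 d\nu_\e$, not $\int\exp(\frac{\eta}{\e}\|x\|_V^2)d\nu_\e$, that is controlled. (3) The cleanest way around this: run the supermartingale argument from a fixed deterministic initial condition (say $x=0$) but evaluated at a large but fixed time, and use $\nu_\e = $ the time-$t$ marginal law in the $t\to\infty$ Ces\`aro sense, i.e. $\nu_\e(B_V(0,R)^c) = \lim_{T\to\infty}\frac1T\int_0^T \P(\|u^0_\e(t)\|_V > R)\,dt$; then apply Markov's inequality $\P(\|u^0_\e(t)\|_V^2 > R^2) \leq e^{-\eta R^2/\e}\,\E\exp(\frac{\eta}{\e}\|u^0_\e(t)\|_V^2) \leq e^{-\eta R^2/\e}\cdot\exp(\frac{c\eta}{\e}\e\,\delta(\e)^{-2/\beta})$ uniformly in $t$. (4) Since $\e\,\delta(\e)^{-2/\beta}\to 0$, for $\e$ small the right side is $\leq \exp(-\frac{\eta R^2}{2\e})$; choosing $R_s^2 = 2s/\eta$ and $\e_s$ small enough gives $\nu_\e(B_V(0,R_s)^c) \leq e^{-s/\e}$, which is the claim.

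\textbf{Main obstacle.} The delicate point is step (1): justifying the exponential supermartingale manipulation rigorously when the stochastic integral lives in a space where $u^x_\e$ is only known to have $V^r$-regularity for $r < \beta$ (so $V^2 = D(A)$ regularity of $u^x_\e$ is borderline since $\beta > 2$). One should first work with Galerkin truncations $u^{x,n}_\e$, where all the It\^o computations and the absorption of the quadratic-variation term against dissipation are legitimate with $n$-independent constants, obtain the supermartingale bound uniformly in $n$, and then pass to the limit $n\to\infty$ using lower semicontinuity of $v\mapsto \exp(\frac{\eta}{\e}\|v\|_V^2)$ under weak convergence and Fatou. A secondary subtlety is making the absorption constant $\eta$ genuinely independent of $\e$: the bad term is $\frac{2c\eta^2}{\e}\|u\|_V^2\|u\|_{V^2}^2$ against $-\frac{2\eta}{\e}\|u\|_{V^2}^2$, and one needs $c\eta\|u\|_V^2 \leq 1$ — which is \emph{not} uniformly true. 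The fix is the standard one: keep a fraction of the dissipation, use the interpolation/Poincar\'e inequality to also retain a $-\frac{\eta}{\e}\|u\|_V^2$ term from $-\|u\|_{V^2}^2 \le -\|u\|_V^2$, and only claim the supermartingale property for $M_t$ as written (with the extra $+\frac{\eta}{\e}\int_0^t\|u^x_\e\|_V^2$ in the exponent, which is harmless since it is nonnegative and only helps the upper bound). This is routine once set up correctly, and everything else is a comparison-to-integral estimate of $\mathrm{Tr}(AQ_\e)$ and Markov's inequality.
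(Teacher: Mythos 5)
Your overall strategy coincides with the paper's: represent $\nu_\e(B_V^c(0,R))$ through the ergodic/Ces\`aro average of $\P(\norm{u^0_\e(t)}_V\geq R)$, apply exponential Chebyshev, estimate $\E\exp\big(\tfrac{\eta}{\e}\norm{u^0_\e(t)}_V^2\big)$ via It\^o's formula, kill the nonlinearity with the torus identity \eqref{eq_prop_non_Au}, and bound $\mathrm{Tr}(AQ_\e)=\sum_k|k|^2\sigma_{\e,k}^2\leq c\,\delta(\e)^{-2/\beta}$ for $\beta>2$. There is, however, a genuine gap exactly where you claim the exponential moment bound ``uniformly in $t$''. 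From your supermartingale $M_t=\exp\big(\tfrac{\eta}{\e}\norm{u(t)}_V^2+\tfrac{\eta}{\e}\int_0^t\norm{u(s)}_V^2ds-\eta\,\mathrm{Tr}(AQ_\e)\,t\big)$, the only way to isolate $\E\exp\big(\tfrac{\eta}{\e}\norm{u^0_\e(t)}_V^2\big)$ is to drop the nonnegative integral, which leaves the bound $\exp\big(\eta\,\mathrm{Tr}(AQ_\e)\,t\big)$, growing exponentially in $t$; inserted into $\limsup_{T\to\infty}\frac1T\int_0^T(\cdot)\,dt$ this gives nothing, so your step (3) fails as written. Declaring the retained integral ``harmless'' misses that it is precisely the dissipation that must be used to cancel the linear-in-$t$ trace contribution. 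The paper handles exactly this point by applying It\^o to $F(t,v)=e^{t}\exp\big(\norm{v}_V^2/(2\e)\big)$ and using the elementary inequality $e^{y}(a-y)\leq e^{a-1}$, which amounts to the differential inequality $\frac{d}{dt}\,\E\,\phi(u(t))\leq-\E\,\phi(u(t))+\exp(P_\e/2)$ with $\phi(v)=\exp\big(\norm{v}_V^2/(2\e)\big)$ and $P_\e=\mathrm{Tr}(AQ_\e)$, hence the genuinely $t$-uniform bound $\E\,\phi(u^0_\e(t))\leq 1+\exp(P_\e/2)$; with this ingredient your steps (3)--(4) go through.

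A secondary point: the ``main obstacle'' you identify is not actually present. The It\^o correction for $\phi(v)=\exp\big(\tfrac{\eta}{\e}\norm{v}_V^2\big)$ equals $\tfrac{2\eta^2}{\e}\,\phi\sum_k\sigma_{\e,k}^2\langle Av,e_k\rangle_H^2+\eta\,\phi\,\mathrm{Tr}(AQ_\e)\leq\tfrac{2\eta^2}{\e}\,\phi\,\norm{v}_{V^2}^2+\eta\,\phi\,\mathrm{Tr}(AQ_\e)$ (using $\sigma_{\e,k}\leq1$), so the quadratic-variation term is of the same homogeneity as the dissipation $-\tfrac{2\eta}{\e}\,\phi\,\norm{v}_{V^2}^2$ and is absorbed uniformly for any $\eta\leq1/2$: there is no quartic term $\norm{v}_V^2\norm{v}_{V^2}^2$ and no need for smallness of $\norm{v}_V$. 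The paper simply takes $\eta=1/2$ and keeps half the dissipation. Your suggestion to justify the formal computation through Galerkin truncations is reasonable and compatible with the paper, which carries out the It\^o computation formally.
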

\begin{proof}
	Fix  $R > 0$, $\e > 0$ and $\gamma > 0$ and let $u_\e^0$ be the solution of equation $\eqref{eq_NS_good}$. Thanks to the ergodicity of $\nu_\e$, we have
	\begin{align}\label{eq_exp_est_erg}
		\nu_\e(B_V^c(0,R))& = \lim_{T \to \infty} \frac{1}{T} \int_0^T \P(u^0_\e(s) \in B_V^c(0,R))\, ds \nonumber
		\\[10pt]  & \leq \exp \Big(-\frac{R^2 }{2\e} \Big) \frac{1}{T}\limsup_{T \to \infty} \int_0^T \E \exp \left( \frac{ \norm{u_\e^0(s)}_V^2}{2\e} \right )ds.
	\end{align}
	To estimate the expectation of the exponential, we apply the Ito formula to the functional $F:\R \times V \to \R$ defined by
	\begin{equation*}
		F(t,v) = \exp\left(t+\frac{ \norm{v}_V^2}{2\e}\right),
	\end{equation*}
	whose derivatives are given by
	\[D_t F(t,u) = F(t,u),\]
	and
	\[D_u F(t,u) = \frac{1}{\e} F(t,u) u,\ \ \ \ 
	D^2_u F(t,u) = \frac{1}{\e^2} F(t,u)u \otimes u + \frac{1}{\e} F(t,u) I.\]
	Formal application of the Ito formula to the solution $u_\e^x$ to equation $\eqref{eq_NS_good}$ implies that
	\begin{align*}
		\E F(t,u_\e^x(t)) & = F(0,x) +\E \int_0^t \Big[D_t F(s,u_\e^x(s)) + \langle D_u F(s,u_\e^x(s)), -Au_\e^x(s) - B(u_\e^x(s))\rangle_V
		\\[10pt] & + \frac{\e}{2} \sum_{k \in \Z20}^\infty \langle D^2_u F(s,u_\e^x(s))Q_\e e_k,e_k \rangle_V \Big] ds
		\\[10pt] & = F(0,x) + \E \int_0^t F(s,u_\e^x(s)) \Big[ 1-\frac{1}{\e} \norm{u^x_\e(s)}_{V^2}^2 
		\\[10pt] & +  \frac{\e}{2} \sum_{k\in \Z20} \frac{1}{\e^2} \left( \left|\langle u_\e^x(s),Q_\e e_k \rangle_V\right|^2  + \frac{1}{\e} \langle Q_\e e_k,e_k\rangle_V \right) \Big]\,ds
		\\[10pt] & = F(0,x) + \E \int_0^t F(s,u_\e^x(s)) \Big[ 1-\frac{1}{\e} \norm{u^x_\e(s)}_{V^2}^2 
		\\[10pt] & +  \frac{1}{2} \sum_{k\in \Z20} \Big( \frac{1}{\e}\sigma_{\e,k}^2 |\langle u_\e^x(s), Ae_k \rangle_H|^2 + |k|^2 \sigma_{\e,k}^2 \Big) \Big]ds
		\\[10pt] & \leq F(0,x) + \E \int_0^T F(s,u_\e^x(s)) \Big[ 1-\frac{1}{2\e} \norm{u_\e^x(s)}_{V^2}^2 
 + \frac{1}{2} \sum_{k \in \Z20} |k|^2\sigma_{\e,k}^2 \Big]ds,
	\end{align*}
		where in the second line we used identity $\eqref{eq_prop_non_Au}$ to dispose of the nonlinearity and in the fourth line we used that $|\sigma_{\e,k}| \leq 1$, for any $k \in \Z20$ and $\e > 0$. 
		
		Now, since $\beta > 2$, we have 
	\begin{equation*}
			P_\e := \sum_{k \in \Z20} |k|^2\sigma_{\e,k}^2 = \sum_{k \in \Z20} \frac{|k|^2}{1+\delta(\e) |k|^{2\beta}} \leq c \int_1^\infty \frac{r}{1+\delta(\e)r^{\beta}}\,dr\leq c \ \delta(\e)^{-2/\beta}.
	\end{equation*}
	Therefore, thanks to the Poincar\'e inequality and the fact that $e^x(a-x) \leq \exp(a-1)$, for every $a>1$ and $x\geq 0$, it follows that
	\begin{align*}
		\E F(t,u_\e^x(t)) & \leq \exp \Big( \frac{\norm{x}_V^2}{2\e} \Big) + \E \int_0^t \exp(s) \exp\Big(\frac{\norm{u_\e^x(s)}_V^2}{2\e}\Big) \Big(1+\frac{1}{2}P_\e - \frac{\norm{u_\e^x(s)}_V^2}{2\e}\Big)ds
		\\[10pt]  & \leq \exp \left( \frac{\norm{x}_V^2}{2\e} \right)  + \int_0^t \exp(s) \exp\left( \frac{1}{2}P_\e\right) ds.
	\end{align*}
	Hence,
	\begin{equation*}
		\E \exp \Big(\frac{\norm{u_\e^x(t)}_V^2}{2\e} \Big)\leq \exp \Big( -t + \frac{\norm{x}_V^2}{2\e} \Big) + \exp\Big( \frac{1}{2}P_\e\Big) .
	\end{equation*}
	Finally, using equation $\eqref{eq_exp_est_erg}$, we see that
	\begin{align*}
		\nu_\e(B_V^c(0,R)) & \leq \exp \Big(-\frac{R^2}{\e} \Big) \limsup_{T\to \infty} \frac{1}{T} \int_0^T \Big[ e^{-t} + \exp\Big( \frac{1}{2}P_\e\Big)\Big]dt
		\\[10pt] & = \exp \Big(-\frac{R^2}{\e} + \frac{P_\e}{2} \Big) \leq \exp \Big( - \frac{R^2- C\e\, \delta_\e^{-2/\beta}}{\e} \Big),
	\end{align*}
	which completes the proof of the lemma, since $\e\,\delta(\e)^{-2/\beta}\to 0$, as $\e\downarrow 0$.
		
\end{proof}
\begin{lemma}\label{lem_smallLambda}
	For any $\delta > 0$ and $s > 0$, there exist $\lambda > 0$ and $T > 0$ such that for any $t \geq T$ and $z \in C([0,t];H)$,
	\begin{equation*}
		|z(0)|_H < \lambda,\ \ I_{T}(z) \leq s \implies \mathrm{dist}_H(z(t),\Phi(s)) < \delta,
	\end{equation*}
	where $\Phi(s):= \{x \in H:U(x) \leq s \}$.
\end{lemma}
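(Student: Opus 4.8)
The plan is to compare the given path $z$ with the path that solves the \emph{same} controlled Navier--Stokes equation but starts at $0$, and then to read off the conclusion from the definition of the quasi-potential. By hypothesis $\mathcal{H}(z)\in L^2(0,t;H)$ with $\frac12\norm{\mathcal{H}(z)}_{L^2(0,t;H)}^2\le s$, so $z$ belongs to $W^{1,2}(0,t;V^{-1})\cap L^2(0,t;V)$ and solves $z'+Az+B(z)=\varphi$ on $[0,t]$, where $\varphi:=\mathcal{H}(z)$ and $\norm{z(0)}_H<\lambda$. First I would let $\tilde z$ be the unique weak solution of the same equation on $[0,t]$ with $\tilde z(0)=0$; this exists and lies in $C([0,t];H)\cap L^2(0,t;V)$ by the theory already invoked in the proof of Theorem~\ref{th_FWLDP} (see also \cite{ks}). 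Since $\mathcal{H}(\tilde z)=\varphi$, $\tilde z(0)=0$ and $\frac12\norm{\varphi}_{L^2(0,t;H)}^2\le s$, the definition \eqref{eq_quasi} of $U$ gives $U(\tilde z(t))\le s$, i.e. $\tilde z(t)\in\Phi(s)$. Hence it will suffice to prove $\norm{z(t)-\tilde z(t)}_H<\delta$, because $\mathrm{dist}_H(z(t),\Phi(s))\le\norm{z(t)-\tilde z(t)}_H$.

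To that end I would set $w:=z-\tilde z$, which solves $w'+Aw+B(w,\tilde z)+B(\tilde z,w)+B(w,w)=0$ with $w(0)=z(0)$. Testing against $w$ in $H$ and using the cancellations $b(\tilde z,w,w)=b(w,w,w)=0$ furnished by \eqref{eq_prop_non_negs}, the only surviving nonlinear term is $b(w,\tilde z,w)$, which by the first inequality in \eqref{eq_sob1} obeys $|b(w,\tilde z,w)|\le c\norm{w}_H\norm{w}_V\norm{\tilde z}_V\le\frac12\norm{w}_V^2+\frac{c^2}{2}\norm{\tilde z}_V^2\norm{w}_H^2$. This yields $\frac{d}{dr}\norm{w(r)}_H^2+\norm{w(r)}_V^2\le c^2\norm{\tilde z(r)}_V^2\norm{w(r)}_H^2$, hence by Gronwall $\norm{w(t)}_H^2\le\norm{z(0)}_H^2\exp\!\big(c^2\int_0^t\norm{\tilde z(r)}_V^2\,dr\big)$. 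The point is that testing $\tilde z'+A\tilde z+B(\tilde z)=\varphi$ against $\tilde z$, using $b(\tilde z,\tilde z,\tilde z)=0$ and $\norm{\varphi}_{V^{-1}}\le\norm{\varphi}_H$, gives $\int_0^t\norm{\tilde z(r)}_V^2\,dr\le\norm{\varphi}_{L^2(0,t;H)}^2\le 2s$, which is \emph{uniform in} $t$ (this is where the dissipativity on the torus is convenient). Therefore $\norm{z(t)-\tilde z(t)}_H\le e^{c^2 s}\norm{z(0)}_H<e^{c^2 s}\lambda$, so the choice $\lambda:=\delta e^{-c^2 s}$ (with $T$ any positive number, e.g. $T:=1$) finishes the proof; in fact the conclusion holds for every $t>0$, so the restriction $t\ge T$ plays no role.

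The only genuinely delicate point is that the continuous-dependence constant must stay bounded as the horizon $t\to\infty$, and this is exactly what the dissipative bound $\int_0^t\norm{\tilde z}_V^2\,dr\le c\,\norm{\varphi}_{L^2(0,t;H)}^2$ supplies. It is also why one compares $z$ with the solution started at $0$ rather than trying to prepend to $z$ a low-cost path joining $0$ to $z(0)$: the latter fails because $z(0)$ is controlled only in $H$, and $U(z(0))$ may well be $+\infty$. The rest is routine: checking that $\tilde z$ is admissible in \eqref{eq_quasi} ($\tilde z(0)=0$ and $\mathcal{H}(\tilde z)=\varphi\in L^2(0,t;H)$, both immediate) and that every estimate is insensitive to the value of $t$. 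I note that, unlike several other arguments in the paper, this lemma does not use the orthogonality relation \eqref{eq_prop_non_Au}.
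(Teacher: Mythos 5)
Your argument is correct. Note that the paper itself gives no proof of this lemma: it defers to \cite{bc} (Lemmas 7.2 and 7.3), remarking only that the statement concerns the deterministic equation. Your proof is the natural deterministic argument that this citation stands for, and all the steps check out: reading the hypothesis as a bound on the action of $z$ over the whole interval $[0,t]$ (which is indeed the intended meaning, and is how the lemma is invoked in the proof of Proposition \ref{prop_upper}); comparing $z$ with the solution $\tilde z$ of the controlled equation with the same control $\varphi=\mathcal{H}(z)$ but initial datum $0$, so that $\tilde z(t)\in\Phi(s)$ directly from the definition \eqref{eq_quasi}; bounding $\int_0^t\norm{\tilde z(r)}_V^2\,dr\le \norm{\varphi}_{L^2(0,t;H)}^2\le 2s$ uniformly in $t$ via the energy estimate (using $\langle B(\tilde z),\tilde z\rangle_H=0$ and Poincar\'e on the torus); and controlling $w=z-\tilde z$ by Gronwall, where the only surviving nonlinear term $b(w,\tilde z,w)$ is handled by the first inequality in \eqref{eq_sob1}. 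Your choice $\lambda=\delta e^{-c^2 s}$ then closes the estimate, and your observation that this yields the conclusion for every $t>0$ (so that the threshold $T$ is superfluous) is consistent with, and strictly stronger than, the statement as given. Two points deserve an explicit word in a written version: that finiteness of the action places $z$ in $W^{1,2}(0,t;V^{-1})\cap L^2(0,t;V)$ so that the testing of the equation for $w$ against $w$ is licit, and the (standard, two-dimensional) existence, uniqueness and energy identity for the controlled Navier--Stokes equation with forcing in $L^2(0,t;H)$, which you correctly invoke; also, your closing remark that prepending a cheap path from $0$ to $z(0)$ would fail (since $z(0)$ is only small in $H$ and $U(z(0))$ may be infinite) correctly identifies why the comparison-with-the-same-control solution is the right device.
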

\begin{lemma}\label{lem_Henergy} For any $s >0$, $\delta >0$ and $r > 0$, let $\lambda$ be as in Lemma \ref{lem_smallLambda}. Then there exists $N \in \mathbb{N}$ large enough that
	\begin{equation*}
		u \in H_{r,s,\delta}(N) \implies I_{T}(u) \geq s,
	\end{equation*}
	where the set $H_{r,s,\delta}(n)$ is defined for $N \in \mathbb{N}$ by
	\begin{equation*}
		H_{r,s,\delta}(N) := \left\{u \in C([0,N];H),\ \norm{u(0)}_H \leq r,\ \norm{u(j)}_H \geq \lambda,\ j = 1,...,N\right\}.
	\end{equation*}
\end{lemma}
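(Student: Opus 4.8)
The plan is to deduce the lemma from the exponential dissipativity of the two-dimensional Navier--Stokes energy: a path with small action over $[0,N]$ is forced, at the integer times, to have spent almost all of its $H$-energy budget returning towards $0$, so staying a distance $\lambda$ away from $0$ at every integer time $1,\dots,N$ must cost an action proportional to $N$. First I would dispose of the trivial case $\mathcal{H}(u)\notin L^2(0,N;H)$, in which $I_N(u)=+\infty\ge s$. Otherwise set $\varphi:=\mathcal{H}(u)\in L^2(0,N;H)$, so that $u\in W^{1,2}(0,N;V^{-1})\cap L^2(0,N;V)$ is a weak solution of $u'+Au+B(u)=\varphi$ on $[0,N]$ and $I_N(u)=\tfrac12\|\varphi\|_{L^2(0,N;H)}^2$. (Here $I_T(u)$ in the statement is understood as the action of $u$ over its whole time interval $[0,N]$, which I write $I_N(u)$.)

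The core estimate is the standard energy inequality. Testing the equation against $u$, using $\langle B(u),u\rangle_H=0$, the Poincaré inequality $\|u\|_H\le\|u\|_V$ (the bottom of the spectrum of $A$ equals $1$), and Young's inequality, one obtains
\[
\frac{d}{dt}\|u(t)\|_H^2+\|u(t)\|_H^2\le \|\varphi(t)\|_H^2,\qquad \text{a.e. }t\in[0,N],
\]
and hence, by Gronwall,
\[
\|u(t)\|_H^2\le e^{-t}\|u(0)\|_H^2+\int_0^t e^{-(t-\tau)}\|\varphi(\tau)\|_H^2\,d\tau,\qquad t\in[0,N].
\]

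Next I would evaluate this at $t=j$ for $j=1,\dots,N$, insert $\|u(j)\|_H\ge\lambda$ and $\|u(0)\|_H\le r$, and sum over $j$. The one computation needing care is the convolution term: interchanging sum and integral, $\sum_{j=1}^N\int_0^j e^{-(j-\tau)}\|\varphi(\tau)\|_H^2\,d\tau=\int_0^N\|\varphi(\tau)\|_H^2\big(\sum_{\lceil\tau\rceil\le j\le N}e^{-(j-\tau)}\big)\,d\tau\le \tfrac{e}{e-1}\|\varphi\|_{L^2(0,N;H)}^2$, since $\sum_{j\ge\lceil\tau\rceil}e^{-(j-\tau)}\le\sum_{m\ge0}e^{-m}=\tfrac{e}{e-1}$; likewise $\sum_{j=1}^N e^{-j}\|u(0)\|_H^2\le\tfrac{r^2}{e-1}$. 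This gives
\[
N\lambda^2\le\sum_{j=1}^N\|u(j)\|_H^2\le\frac{r^2}{e-1}+\frac{2e}{e-1}\,I_N(u),
\]
so if $I_N(u)<s$ then $N<\dfrac{r^2+2es}{\lambda^2(e-1)}$. Fixing any integer $N\ge\dfrac{r^2+2es}{\lambda^2(e-1)}$ therefore forces $I_N(u)\ge s$ for every $u\in H_{r,s,\delta}(N)$.

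I do not anticipate a genuine obstacle here: the whole mechanism is that the linear dissipation beats the forcing in the $H$-energy balance, and the value $\lambda$ from Lemma~\ref{lem_smallLambda} plays no special role beyond being a fixed positive threshold (any $\lambda>0$ would do; the dependence of $N$ on $s,\delta,r$ enters only through $\lambda=\lambda(s,\delta)$ and $r$). The only minor technical points to spell out are the justification of the energy equality for the weak solution $u$ in two dimensions — standard, since $u\in L^2(0,N;V)$ makes $\langle B(u),u\rangle_H=0$ rigorous — and the geometric-series bookkeeping in the convolution sum above.
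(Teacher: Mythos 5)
Your proof is correct, but it does not follow the paper's route: the paper does not prove this lemma at all, it defers to \cite{bc} (Lemma 7.3 there), where the statement is obtained from properties of the deterministic Navier--Stokes dynamics within the Sowers-type scheme. Your argument is a self-contained, quantitative alternative: the energy balance $\tfrac12\tfrac{d}{dt}\|u\|_H^2+\|u\|_V^2=\langle\varphi,u\rangle_H$ (rigorous for $u\in W^{1,2}(0,N;V^{-1})\cap L^2(0,N;V)$ with $\varphi=\mathcal{H}(u)\in L^2(0,N;H)$, by the Lions lemma and $\langle B(u),u\rangle=0$), combined with the Poincar\'e inequality with constant $1$ for mean-zero fields on the torus, gives $\tfrac{d}{dt}\|u\|_H^2+\|u\|_H^2\le\|\varphi\|_H^2$; the variation-of-constants bound, evaluation at integer times, and the geometric-series bookkeeping are all accurate, yielding $N\lambda^2\le \tfrac{r^2}{e-1}+\tfrac{2e}{e-1}I_N(u)$ and hence the explicit admissible choice $N\ge (r^2+2es)/(\lambda^2(e-1))$. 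What your approach buys is an elementary proof with an explicit $N$ and the observation (also correct) that the specific $\lambda$ from Lemma \ref{lem_smallLambda} matters only as a fixed positive threshold, with $\delta$ entering only through $\lambda$; what the citation-based route buys the authors is brevity and uniformity with the rest of the Sowers/\cite{bc} machinery, where the companion Lemma \ref{lem_smallLambda} genuinely needs the deterministic decay analysis. I see no gap in your argument.
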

\noindent The proofs of Lemma $\ref{lem_smallLambda}$ and $\ref{lem_Henergy}$ depend only on the properties of the deterministic Navier-Stokes equation and can be found in \cite{bc} (see Lemmas 7.2 and 7.3). 
\begin{proof}[Proof of Proposition \ref{prop_upper}]
	Fix any $s > 0$, $\delta > 0$ and $\gamma > 0$ and let $R_s$ be as in Lemma \ref{lem_exp_est}. Due to the invariance of $\nu_\e$, for any $t \geq 0$ we have
	\[\begin{array}{l}
		\ds \nu_\e\left(\left\{ h \in H: \mathrm{dist}_H(h,  \Phi(s)) \geq \delta\right\}\right)  = \int_H \P\left(\mathrm{dist}_H(u^y_\e(t),\Phi(s))\geq \delta\right) d\nu_\e(y)
		\\[16pt] 
		 \ds = \int_{B_V^c(0,R_s)}\P\left(\mathrm{dist}_H(u^y_\e(t),\Phi(s))\geq \delta\right) d\nu_\e(y) 
		\\[16pt] 
		 \ds + \int_{B_V(0,R_s)} \P\left(\mathrm{dist}_H(u^y_\e(t),\Phi(s))\geq \delta , u_\e^y \in H_{R_s,s,\delta}(N)\right)d\nu_\e(y)
		\\[16pt] 
		 \ds + \int_{B_V(0,R_s)} \P\left( \mathrm{dist}_H(u^y_\e(t),\Phi(s))\geq \delta , u_\e^y \notin H_{R_s,s,\delta}(N) \right)d\nu_\e(y)
		\\[16pt] 
		 \ds =: K_1+K_2+K_3.
	\end{array}\]
	Now, thanks to Lemma \ref{lem_exp_est} we know that
	\begin{equation*}
		K_1 \leq \nu_\e (B_V^c(0,R_s)) \leq \exp \Big(-\frac{s}{\e} \Big).
	\end{equation*}
	Next, let $N$ be as in Lemma \ref{lem_Henergy}. Since $H_{R_s,s,\delta}(N)$ is a closed set in $C([0,N];H)$ and $B_V(0,R_s)$ is a compact subset of $H$, the Dembo-Zeitouni uniform large deviation principle over compact sets, Corollary \ref{cor_DZLDP}, implies that there exists $\e_0>0$ such that
	\begin{align*}
		K_2 & \leq \sup_{y \in B_V(0,R_s)} \P(u_\e^y \in H_{R_s,s,\delta}(N)) 
		\\[10pt] & \leq \exp \Big( -\frac{1}{\e} \Big[ \inf_{z \in B_V(0,R_s)} \inf_{h \in H_{R_s,s,\delta}(N)} I_{T}^z(h) - \gamma \Big] \Big) ,
	\end{align*}
	for any $\e \leq \e_0$.	Hence, by Lemma \ref{lem_Henergy}, \[K_2 \leq \exp(-\frac{1}{\e}[s-\gamma]).\] To address $K_3$, we use the Markov property of $u_\e$ to stop the process at integer times. We then have
	\begin{align*}
		K_3 & = \int_{B_V(0,R_s)} \P \left(\bigcup_{j = 1}^{N} \left\{ |u^y_\e(j)|_H < \lambda\right\} \bigcap \left\{\mathrm{dist}_H(u^y_\e(t),\Phi(s))\geq \delta \right\}\right)d\nu_\e(y)
		\\ & \leq \sum_{j=1}^{N} \int_{B_V(0,R_s)} \P \left( \left\{|u^y_\e(j)|_H < \lambda \right\}\bigcap \left\{\mathrm{dist}_H(u^y_\e(t),\Phi(s))\geq \delta\right\}\right) d\nu_\e(y)
		\\ & \leq  \sum_{j=1}^N \sup_{y \in B_H(0,\lambda)} \P( \mathrm{dist}_H(u_\e^y(t-j),\Phi(s)) \geq \delta).
	\end{align*}
	In order to use the uniform LDP of Theorem \ref{th_FWLDP}, we must convert this event at time $t-j$ to an event in $C([0,t-j];H)$. To do so, we pick $t$ large enough that Lemma \ref{lem_smallLambda} applies for $\delta/2$. Then, if $y \in\,B_H(\lambda)$
	\begin{align*}
		 \mathrm{dist}_H(u_\e^y(t-j), & \Phi(s)) \geq \delta 
		 \\[10pt]  & \implies \inf \Big\{ \norm{u_\e^y -v}_{C([0,t-j];H)}:\norm{v(0)}_H < \lambda, I_{T}(v)\leq s \Big\} \geq \frac{\delta}{2}	
		 \\[10pt]  & \implies \mathrm{dist}_{C([0,t-j];H)}\Big(u_\e^y, \Psi^y(s) \Big) \geq \delta/2,
	\end{align*}
	where 
	\[\Psi^y(s):=\{v \in C([0,t-j];H):v(0) = y, I_{T}(v) \leq s \}.\] 
	Then, by Theorem \ref{th_FWLDP}, there exists $\e_{0,j}$ such that for any $\e \leq \e_{0,j}$,
	\begin{align*}
		\sup_{y \in B_H(0,\lambda)} & \P( \mathrm{dist}_H(u_\e^y(t-j),\Phi(s))  		 \geq \delta) 
		\\ & \leq  \sup_{y \in B_H(0,\lambda)} \P ( \mathrm{dist}_{C([0,t-j];H)}\Big(u_\e^y, \Psi^y(s) \Big) \geq \delta/2)
		\\ & \leq \exp \Big(- \frac{s-\gamma}{\e} \Big).
	\end{align*}
	Hence, for any $\e < \min(\e_0,\e_{0,1},...,\e_{0,N})$ it follows that
	\begin{equation*}
		K_3 \leq N\exp \Big(- \frac{s-\gamma}{\e} \Big),
	\end{equation*}
	which implies the result.
\end{proof}

\end{document}